%% file: main.tex
\documentclass{amsart}
\usepackage[utf8]{inputenc}
\usepackage{xcolor}
\usepackage{mathtools}
\usepackage{amsthm}
\usepackage{tikz-cd}
\usepackage{comment}
\usepackage{blindtext}
\usepackage{stix}
\usepackage{natbib}
\usepackage{graphicx}
\usepackage{scalerel}
\usepackage{stackengine}
\usepackage{subcaption}
\stackMath

\numberwithin{equation}{section}

\newtheorem{theorem}{Theorem}[section]
\newtheorem{prop}[theorem]{Proposition}
\newtheorem{lemma}[theorem]{Lemma}
\newtheorem{corollary}[theorem]{Corollary}

\theoremstyle{definition}

\theoremstyle{remark}
\newtheorem*{remark}{Remark}

\newcommand{\R}{\mathbb{R}}
\newcommand{\Z}{\mathbb{Z}}

\newcommand{\ra}{\rightarrow}

\newcommand{\lm}{\Lambda}

\newcommand{\norm}[1]{\left\lVert #1\right\rVert}

\def\vecsign#1{%
  \rule[1.388\LMex]{\dimexpr#1-2.5pt}{.36\LMpt}%
  \kern-6.0\LMpt\mathchar"017E%
}

\def\theraysign#1{%
  \rule{0pt}{17\LMpt}%
  \rule[1.384\LMex]{\dimexpr#1-2.5pt}{.40\LMpt}%
  \kern-6.0\LMpt\mathchar"017E%
}

\title[Variational persistence in the $n$-body problem]
{Variational Persistence of Periodic and Quasi-Periodic Orbits
in Symmetric $N$-Body Problems}
\author{Hangqi Liang, Zhihong Xia, Tingjie Zhou}
\address{Department of Mathematics, Rice University, Houston, TX 77005 USA}
\address{Department of Mathematics, Northwestern University, Evanston, 
  IL 60208 USA}
\address{Chern Institute of Mathematics and LPMC, Nankai University,Tianjin, China}
\email{hl168@rice.edu, xia@math.northwestern.edu, tingjiezhou@nankai.edu.cn}
\date{\today}

\begin{document}

\begin{abstract}
Many mechanical systems, such as the Newtonian $n$-body system, are
invariant under rotations. The rotational symmetry of the system gives
rise to two commuting actions: the dynamical action and the action of
the symmetry group. In this paper, we exploit this symmetry to construct
families of periodic and quasi-periodic trajectories. In the
strong-force setting, we extend Montgomery's variational approach to
free homotopy classes of curves that are periodic up to a prescribed
nontrivial rotation. In the weak-force setting, we establish, under
suitable hypotheses, global and local variational persistence results
for action minimizers in loop spaces defined by general finite
symmetry-group actions under sufficiently small rotations. As applications, we apply these results to the figure-eight orbit, under a
numerically supported assumption of strict local minimality, to double
choreographic loops with $q$-fold rotation symmetry, and to
counter-rotating double choreographic loops in planar $(N+3)$-body problems.
\end{abstract}
\maketitle

\section{Introduction}

This paper concerns periodic and quasi-periodic trajectories of the planar $n$-body problem. We first describe the underlying variational idea in the setting of Lagrangian systems with symmetry; the precise $n$-body formulation is given in Section~2.

Let $M$ be a Riemannian manifold and let $L:TM\to\mathbb R$ be a smooth regular Lagrangian which is strictly convex in the velocity variables. For a fixed $T>0$, the action of a curve $\gamma:[0,T]\to M$ is
\begin{equation}\label{action}
A_T(\gamma)=\int_0^T L(\gamma(t),\dot\gamma(t))\,dt.
\end{equation}
Critical points with fixed endpoints satisfy the Euler--Lagrange equations, whose local flow on $TM$ is denoted by $\phi^t$. Critical points among free loops satisfying $\gamma(0)=\gamma(T)$ extend to $T$-periodic trajectories.

\

We now introduce the concept of symmetry in Lagrangian systems. Let
$G$ be a group. For our application, $G$ is typically $SO(2)$, $SO(3)$
or a permutation group. Let $\alpha: G\times M \ra M$ be a smooth action of
$G$ on $M$. The action $\alpha$ induces an action on the tangent
bundle of $M$. By slightly abusing the notation, we simply denote this induced
action again by $\alpha$.  Also, for any $g \in G$, we write
$\alpha^g: M \ra M$ as the action of element $g$ on $M$. 

We say that $L$ is invariant under action
of $\alpha$, or just $G$ if the action is clear from context, provided that
$$L = L \circ \alpha^g \; \mbox{ for all } \; g\in G.$$

Let $\phi^t: TM \ra TM$ be the Euler-Lagrangian flow generated by the
Lagrangian $L$. The invariance of $L$ under the action of $G$ implies that the symmetry action commutes with the Euler--Lagrange flow:
$$\phi^t\circ\alpha^g=\alpha^g\circ\phi^t,$$
whenever both sides are defined.

Let $$\gamma: [0, T] \ra M$$ be a curve in $M$. We say that $\gamma$
is an {\em equivariant $g$-loop}\/, or simply $g$-loop, if there is an
element $g \in G$ such that
$$\gamma(T) = \alpha^g(\gamma(0)).$$
An equivariant loop is not necessarily a loop in $M$. Of course, if
$g \in G$ is the identity element, then $g$-loop $\gamma$ is a closed
curve in the configuration space $M$.

Given a fixed $g \in G$, let $\gamma^*$ be a critical point of
$A_T(\gamma)$ among all $g$-loops. Then clearly $\gamma^*$ is
a trajectory for the Euler-Lagrange flow. Moreover, its extension by
$$\gamma^*(t) = \alpha^g( \gamma^*(t-T)) , \; \mbox{ for all } \; t
\in \R$$ is a full trajectory. We call such a trajectory a \emph{relative periodic orbit} with relative
period $T$, since it is periodic up to the action of the fixed symmetry
$g$. We also call it an \emph{equivariant $g$-periodic orbit}, or simply
a \emph{$g$-periodic orbit}. A $g$-periodic orbit is not necessarily
periodic. If $\alpha^{g^k}$ is the identity map for some positive integer $k$,
then $g$-periodic orbit of period $T$ is a periodic orbit, but
with period $kT$.

\

For the planar $n$-body problem considered here, the Lagrangian is
$$L(x,\dot x)=K(x,\dot x)+U(x),$$
where $K$ is the kinetic energy and $U$ is the potential function.
We take $G=SO(2)=\R/2\pi\Z$, acting by simultaneous rotations of all
bodies. Both $K$ and $U$ are invariant under this action, and hence
so is $L$. For a rotation through angle $\beta$, the $g$-loop
condition becomes
$$x(T)=e^{i\beta}x(0).$$

If $\beta/(2\pi)=p/q\in\mathbb Q$, with $p\in\mathbb Z$
and $q\in\mathbb N$, then a $\beta$-periodic orbit with
relative period $T$ is periodic with period $qT$.
If $\beta/(2\pi)$ is irrational, excluding the relative
equilibrium case, the orbit is quasi-periodic with two
rationally independent frequencies, and its closure in
phase space is a two-dimensional torus.

\

Our task is therefore to construct collision-free critical points of
the action subject to this boundary condition. We pursue this through
action minimization, where the main difficulties are escape to infinity
and collisions.

Variational constructions of periodic orbits provide the starting point
for our approach. Montgomery \cite{montgomery1998n} established the
existence of action-minimizing reduced periodic solutions in tied free
homotopy classes under a strong-force condition. For weak-force
potentials, symmetry-constrained variational methods have produced a
number of collision-free periodic minimizers. Later in the paper, we apply our results to three such constructions:
the figure-eight solution of Chenciner and Montgomery
\cite{chenciner2000remarkable}, the double choreographic loops with
$q$-fold rotation symmetry of Chen \cite{chen2003binary}, and the
counter-rotating double choreographies in planar $(N+3)$-body problems
of Yuan and Zhang \cite{yuan2018new}.

In the strong-force setting, we extend Montgomery's variational approach
to free homotopy classes of curves that are periodic up to a prescribed
nontrivial rotation.

In the weak-force setting, we establish global and local variational
persistence results under sufficiently small rotations in loop spaces
defined by general finite symmetry-group actions. The resulting
trajectories satisfy the prescribed symmetry constraints when viewed in
the corresponding uniformly rotating frame. Both results require the
type~R compatibility condition (defined in Section~4), which we show to
be sharp for obtaining solutions of the full equations at nonzero
rotation, apart from constant rotating-frame profiles. The global result
additionally assumes a condition equivalent to coercivity of the action
on the unrotated symmetry-constrained loop space.

The global result shows that, if all unrotated global minimizers are
collision-free, then all global minimizers in the corresponding rotated
symmetry classes remain collision-free and approach the unrotated
minimizing set uniformly in $H^1$-distance as the rotation angle tends
to zero. For this global result, we also derive an explicit admissible
rotation bound from a test-path action and a suitable collision-action
lower bound.

The local result gives nearby collision-free local minimizers in the
corresponding rotated symmetry class, converging in $H^1$ to a
prescribed collision-free strict local minimizer in the same limit.

Several previous works obtain rotating or quasi-periodic solutions by
different methods. Chenciner, F{\'e}joz, and Montgomery
\cite{chenciner2005rotating} constructed rotating figure-eight families
by an implicit-function argument under a nondegeneracy assumption,
whereas our local theorem relies on strict local minimality. Ferrario
and Terracini \cite{ferrario2004existence} developed general
collision-exclusion criteria for equivariant minimizers through the
rotating circle property, including in rotating frames. In the planar
three-body problem, Barutello, Ferrario, and Terracini
\cite{Barutello2008Symmetry} obtained more specific collision-exclusion
results for symmetric minimizers in both inertial and uniformly rotating
frames.

Variational constructions of quasi-periodic solutions have also been
obtained in several specific symmetry and boundary classes. Chen
\cite{CHEN_2003} constructed periodic and quasi-periodic action
minimizers for the equal-mass four- and six-body problems; in particular,
his four-body construction includes the cases $d=q=3$ and $d=q=5$
related to our second application. Chen, Ouyang, and Xia
\cite{chen2012action} constructed periodic and quasi-periodic minimizers
under prescribed rotational and reflection symmetries. Liu and Yan
\cite{liu2021exclusion} obtained periodic and quasi-periodic solutions
from free-boundary minimization problems in several planar equal-mass
four- and five-body configurations. These results construct such
solutions directly in particular variational classes, while our
weak-force results give an abstract persistence mechanism from a
prescribed collision-free unrotated minimizer or minimizing set under
sufficiently small rotations.

The paper is organized as follows. Section~2 introduces the variational
setting and the compactness lemma. Sections~3 and~4 establish the
strong-force and weak-force results, respectively. Section~5 presents
applications to the figure-eight orbit, under a numerically supported
assumption of strict local minimality, and to several symmetry-constrained
periodic minimizers, together with numerical illustrations of the
resulting rotating families.

\section{Preliminaries}

\input{part2}

\section{Strong Force}

\input{part3.tex}

\section{Weak Force}

\input{part4.tex}

\section{Applications}

\input{part5.tex}

\bibliographystyle{plain}
\bibliography{cite}

\end{document}

%% file: part2.tex
We begin by introducing the notation and variational framework for the
planar $n$-body problem used throughout the paper. Throughout the paper,
we fix $T>0$. Let $m_1,\ldots,m_n>0$ be the masses of the $n$ bodies and write
$$
x=(x_1,\ldots,x_n)\in(\mathbb R^2)^n,
$$
where $x_i\in\mathbb R^2$ denotes the position of the $i$-th body. For later estimates, set
$$m_{\min}:=\min_{1\leq i\leq n}m_i,\qquad m_{\max}:=\max_{1\leq i\leq n}m_i.$$
We work in the center-of-mass configuration space
$$
V:=
\left\{
x=(x_1,\ldots,x_n)\in(\mathbb R^2)^n:
\sum_{i=1}^n m_i x_i=0
\right\}.
$$

For the planar $n$-body problem, we specialize the symmetry group $G$
introduced in Section~1 to
$$
G=SO(2)\cong\mathbb R/(2\pi\mathbb Z)
\qquad\text{and}\qquad
M=V.
$$
We
define the loop space
$$
\lm:=
\left\{
x\in H^1([0,T],V):x(0)=x(T)
\right\}.
$$

For $\beta\in\mathbb R$, let
$[\beta]\in\mathbb R/(2\pi\mathbb Z)\cong SO(2)$ denote the
corresponding rotation. Identifying $\mathbb R^2$ with $\mathbb C$,
its action on $V$ is given by
$$
\alpha^{[\beta]}x
=
\exp(i\beta)x
=
\left(
\exp(i\beta)x_1,\ldots,\exp(i\beta)x_n
\right).
$$
We say that a curve $y:[0,T]\to V$ is a $\beta$-loop if
$$
y(T)=\exp(i\beta)y(0).
$$
The kinetic energy of $n$ moving masses is given by
$$
K(x,\dot{x})
=
\frac12\sum_{i=1}^n m_i\norm{\dot{x}_i}^2.
$$
We write
$$
V^*:=
\left\{
x=(x_1,\ldots,x_n)\in\mathbb R^{2n}:
\sum_{k=1}^n m_kx_k=0,\;
x_i\neq x_j \text{ for } i\neq j
\right\}
$$
for the collision-free configuration space.

Let $U:(\R^2)^n\to\R_{\geq0}\cup\{\infty\}$ denote the potential
energy. Throughout, we assume that $U$ is lower semi-continuous,
invariant under simultaneous translations and rotations, and is $C^2$ on $V^*$. We also assume that $U(x)=+\infty$ whenever $x_i=x_j$ for some $i\neq j$. Further
assumptions on $U$ will be specified in the corresponding sections. The corresponding Lagrangian is
$$
L(x,\dot{x})=K(x,\dot{x})+U(x).
$$
For $x\in H^1([0,T],V)$, the action \eqref{action} takes the form
$$A_T(x)=\int_0^T L(x(t),\dot x(t))\,dt.$$
Collision-free critical points of the action satisfy
the equations of motion
\begin{equation}\label{equation of motion}
m_i\ddot{x}_i=\frac{\partial U}{\partial x_i}(x),\qquad i=1,\ldots,n.
\end{equation}

One obstruction in the variational problem is that a sequence of curves
with uniformly bounded action may escape to infinity. Once a uniform
$L^\infty$-bound is established, the existence of a limiting curve
follows from a standard compactness argument:

\begin{lemma}\label{lemma:direct method compactness}
Let
$(x^{[k]})_{k\in\mathbb N}\subset
H^1([0,T],V)$ satisfy
$$
\sup_{k\in\mathbb N}
\int_0^T K(x^{[k]}(t),\dot{x}^{[k]}(t))\,dt<\infty,
\qquad
\sup_{k\in\mathbb N}\norm{x^{[k]}}_\infty<\infty.
$$
Then there exist a subsequence, still denoted by
$(x^{[k]})_{k\in\mathbb N}$, and
$x\in H^1([0,T],V)$ such that
$$
x^{[k]}\rightharpoonup x
\quad\text{in }H^1([0,T],V),
\qquad
x^{[k]}\longrightarrow x
\quad\text{in }C^0([0,T],V),
$$
and
$$
A_T(x)\leq\liminf_{k\to\infty}A_T(x^{[k]}).
$$
If, in addition, $x$ is collision-free, $U$ is continuous on $V^*$,
and
$$
A_T(x^{[k]})\longrightarrow A_T(x),
$$
then
$$
x^{[k]}\longrightarrow x
\qquad\text{strongly in }H^1([0,T],V).
$$
\end{lemma}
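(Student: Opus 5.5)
The plan is the standard direct-method argument in three steps: weak compactness, lower semicontinuity, and an upgrade to strong convergence via norm convergence.

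\emph{Step 1: compactness.} Since $m_i\geq m_{\min}>0$, the kinetic bound gives
\[
\sup_k\norm{\dot x^{[k]}}_{L^2}^2\leq \frac{2}{m_{\min}}\sup_k\int_0^T K(x^{[k]},\dot x^{[k]})\,dt<\infty .
\]
Together with the $L^\infty$ bound, which controls the $L^2$ norm, this shows $(x^{[k]})$ is bounded in $H^1([0,T],V)$. By reflexivity, a subsequence converges weakly in $H^1$ to some $x$. The limit stays in $V$ because $V$ is a closed linear subspace and the center-of-mass constraint is preserved under weak limits. The bound
\[
|x^{[k]}(t)-x^{[k]}(s)|\leq C|t-s|^{1/2}
\]
makes the sequence equicontinuous and uniformly bounded, so Arzel\`a--Ascoli yields uniform convergence along a further subsequence. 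The uniform limit agrees with the weak $H^1$ limit, since both imply convergence in $L^2$.

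\emph{Step 2: lower semicontinuity.} The kinetic term $x\mapsto\int_0^T K\,dt$ is a continuous convex quadratic form (a weighted $L^2$ norm of $\dot x$). It is therefore weakly lower semicontinuous, so $\int K(x,\dot x)\leq\liminf\int K(x^{[k]},\dot x^{[k]})$. For the potential term, $U\geq 0$ is lower semicontinuous and $x^{[k]}(t)\to x(t)$ for every $t$, so $U(x(t))\leq\liminf_k U(x^{[k]}(t))$ pointwise. Fatou's lemma then gives $\int U(x)\leq\liminf\int U(x^{[k]})$. Adding the two inequalities, using that the liminf of a sum dominates the sum of the liminfs, gives $A_T(x)\leq\liminf A_T(x^{[k]})$.

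\emph{Step 3: strong convergence.} This is the only delicate point. Since $x$ is collision-free and continuous on the compact interval $[0,T]$, its image is a compact subset of $V^*$, so it has a positive distance $\delta$ to the collision set. By uniform convergence, for large $k$ the curves $x^{[k]}$ stay in the compact set
\[
\{y\in V:\operatorname{dist}(y,x([0,T]))\leq\delta/2\}\subset V^*.
\]
$U$ is continuous and hence bounded on this set, so $U(x^{[k]}(t))\to U(x(t))$ uniformly, and in particular
\[
\int_0^T U(x^{[k]})\,dt\to\int_0^T U(x)\,dt.
\]
Subtracting this from $A_T(x^{[k]})\to A_T(x)$ gives convergence of the kinetic integrals. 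These integrals are the squares of the equivalent Hilbert norms
\[
\norm{\dot y}_m^2:=\sum_i m_i\norm{\dot y_i}_{L^2}^2 .
\]
Weak convergence of $\dot x^{[k]}$ in $L^2$, combined with convergence of these norms, gives strong $L^2$ convergence of $\dot x^{[k]}$ by expanding $\norm{\dot x^{[k]}-\dot x}_m^2$. Uniform convergence already handles the $L^2$ part of the $H^1$ norm, so $x^{[k]}\to x$ strongly in $H^1$.

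The main obstacle is ensuring the potential term converges, rather than merely satisfying a liminf inequality. The collision-free hypothesis and uniform convergence resolve this by confining the tail of the sequence to a compact set on which $U$ is continuous and bounded.
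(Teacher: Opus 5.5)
Your proposal is correct and follows the paper's proof essentially step by step: an $H^1$ bound with Arzel\`a--Ascoli and identification of the weak and uniform limits, weak lower semicontinuity of the kinetic term plus Fatou's lemma for the potential, and convergence of the potential integrals (from collision-freeness and uniform convergence) to upgrade weak to strong $L^2$ convergence of $\dot x^{[k]}$. Your compact-neighborhood argument only spells out the step the paper states briefly as ``continuity of $U$ on $V^*$''.
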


\begin{proof}

By the Cauchy--Schwarz inequality, for every $0\leq a<b\leq T$,
\begin{equation}\label{eq: equicontinuity estimate}
\begin{split}
\norm{x^{[k]}(a)-x^{[k]}(b)}
&\leq
\int_a^b \norm{\dot{x}^{[k]}(t)}\,dt \\
&\leq
\sqrt{b-a}
\left(
\int_a^b \norm{\dot{x}^{[k]}(t)}^2\,dt
\right)^{1/2} \\
&\leq
\sqrt{b-a}
\left(
\frac{2}{m_{\min}}
\int_a^b K(x^{[k]}(t),\dot{x}^{[k]}(t))\,dt
\right)^{1/2}.
\end{split}
\end{equation}
Since the kinetic parts of the actions are uniformly bounded, the
above estimate shows that $(x^{[k]})_{k\in\mathbb N}$ is uniformly
equicontinuous. The Arzel\`a--Ascoli theorem
therefore gives a subsequence which converges uniformly on $[0,T]$ to
some continuous curve $x$.

The uniform kinetic-energy bound also implies that the derivatives $\dot x^{[k]}$ are uniformly bounded in $L^2([0,T],V)$. Moreover,
$$
\int_0^T\norm{x^{[k]}(t)}^2\,dt
\leq
T\norm{x^{[k]}}_\infty^2,
$$
so $(x^{[k]})_{k\in\mathbb N}$ is uniformly bounded in
$H^1([0,T],V)$.
Therefore, after passing to a
further subsequence, the sequence converges weakly in $H^1$ to some curve
$\widetilde{x}$. The uniform convergence implies convergence in
$L^2$, and hence the weak $H^1$ limit agrees with the uniform limit.
Therefore,
$$
\widetilde{x}=x.
$$
By weak lower semicontinuity of the kinetic term,
$$
\int_0^T K(x(t),\dot{x}(t))\,dt
\leq
\liminf_{k\to\infty}
\int_0^T
K(x^{[k]}(t),\dot{x}^{[k]}(t))\,dt.
$$
Since $x^{[k]}(t)\to x(t)$ for every $t\in[0,T]$ and $U$ is
nonnegative and lower semicontinuous, Fatou's lemma gives
$$
\int_0^T U(x(t))\,dt
\leq
\liminf_{k\to\infty}
\int_0^T U(x^{[k]}(t))\,dt.
$$
Combining these two inequalities, we obtain
$$
A_T(x)
\leq
\liminf_{k\to\infty}A_T(x^{[k]}).
$$
For the last statement, since $x$ is collision-free and $x^{[k]}\to x$ uniformly, continuity
of $U$ on $V^*$ gives
$$
\int_0^T U(x^{[k]}(t))\,dt
\longrightarrow
\int_0^T U(x(t))\,dt.
$$
Using assumed convergence of actions, we have
$$
\int_0^T K(x^{[k]}(t),\dot x^{[k]}(t))\,dt
\longrightarrow
\int_0^T K(x(t),\dot x(t))\,dt.
$$
Together with the weak $H^1$ convergence, this implies
$$
\dot x^{[k]}\longrightarrow\dot x
\qquad\text{strongly in }L^2.
$$
Since $x^{[k]}\to x$ uniformly, we conclude that
$$
x^{[k]}\longrightarrow x
\qquad\text{strongly in }H^1.
$$
\end{proof}
In particular, if $(x^{[k]})_{k\in\mathbb N}$ is a minimizing
sequence in an admissible class which is closed under the above
convergences, then its limit $x$ belongs to that class and satisfies
$$
A_T(x)=\inf A_T.
$$

%% file: part3.tex
Given a potential energy $U:(\mathbb{R}^{2})^n\rightarrow \R_{\geq 0}\cup\{\infty\}$, we say that $U$ satisfies the strong-force law if there exists a constant $c>0$ such that
$$U(x)>\frac{c}{\rho^2(x)}$$
whenever
$$\rho(x):=\min\left\{\norm{x_i-x_j}:1\leq i<j\leq n\right\}$$
is sufficiently small.

Throughout the remainder of this section, we assume that $U$ satisfies
the strong-force law.

For example, the potentials
$$U(x)=\sum_{i<j}\frac{m_im_j}{\norm{x_i-x_j}^2}$$
and
$$U(x)=\sum_{i<j}\frac{m_im_j}{\norm{x_i-x_j}^2}
+\sum_{i<j}\frac{m_im_j}{\norm{x_i-x_j}}$$
both satisfy the strong-force law.
\

By the strong-force condition (see, for example, Montgomery
\cite{montgomery1998n}), any curve $x\in\lm$ containing a collision
satisfies
$$A_T(x)=+\infty.$$
In other words, collision trajectories cannot have finite action for
strong-force potentials.

We now recall the notion of tiedness used by Montgomery\cite{montgomery1998n}. Let $Y$ be a
non-compact complete Riemannian metric space, let $\Sigma\subset Y$,
and let $C$ be a family of curves $x:[0,T]\to Y\setminus\Sigma$. We
say that a sequence $(x^{[k]})_{k\in\mathbb N}\subset C$ escapes to
infinity if there exist $t_k\in[0,T]$ such that
$$d(x^{[k]}(t_k),y_0)\longrightarrow\infty$$
for some fixed base point $y_0\in Y$. The family $C$ is said to be
\emph{tied} to $\Sigma$ if every sequence in $C$ escaping to infinity
satisfies
$$\ell(x^{[k]})\longrightarrow\infty,$$
where $\ell(x^{[k]})$ denotes the length of $x^{[k]}$.

The two possible obstructions to the direct variational method can then
be treated separately: tiedness prevents minimizing sequences from
escaping to infinity, while the strong-force condition excludes
collisions. These two observations form the basis of the following
existence theorem of Montgomery \cite{montgomery1998n}, stated below
for our fixed period $T$.

    Montgomery works on the shape space $\mathcal C:=V/SO(2),$
    and denotes by $\mathcal C^*$ the complement of the collision locus in $\mathcal C$. The kinetic energy induces a Riemannian metric on $\mathcal C$, and the
corresponding action functional on curves in $\mathcal C$ is referred to
as the \emph{reduced action}.

\begin{theorem}[Montgomery]
Suppose that the $n$-body potential satisfies the strong-force
condition and that $\alpha$ is a tied free homotopy class of loops in
$\mathcal C^*$. Then there exists a collision-free
reduced periodic solution of period $T$ representing $\alpha$. The
solution is obtained by minimizing the reduced action over $\alpha$.
\end{theorem}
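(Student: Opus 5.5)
We prove Montgomery's theorem by the direct method on the free homotopy class $\alpha$, working with the reduced action on the shape space $\mathcal C$. Throughout, $\mathcal C$ carries the quotient metric induced by the kinetic energy; it is a complete, non-compact metric space, and $\mathcal C^*$ is an open subset.

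\textbf{Setup and a priori bounds.} Set
$$m:=\inf\{A_T(\gamma):\gamma\in\alpha,\ \gamma\in H^1\}.$$
Since $\alpha$ is a class of loops in $\mathcal C^*$, it contains a smooth representative, so $m<\infty$. Take a minimizing sequence $(\gamma^{[k]})\subset\alpha$ with $A_T(\gamma^{[k]})\le m+1$. By Cauchy--Schwarz, as in \eqref{eq: equicontinuity estimate}, the length obeys
$$\ell(\gamma^{[k]})\le\sqrt{2T(m+1)}$$
in the kinetic metric, which is where the normalization of $K$ enters.

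\textbf{Ruling out escape to infinity.} Tiedness gives: if some subsequence escaped to infinity, its lengths would tend to $\infty$, contradicting the bound above. Hence all $\gamma^{[k]}$ lie in a fixed bounded set of $\mathcal C$. Lift each reduced loop to a curve $x^{[k]}$ in $V$ that is horizontal, i.e.\ orthogonal to the rotation orbits. The lift satisfies $x^{[k]}(T)=e^{i\theta_k}x^{[k]}(0)$, its kinetic energy equals the reduced kinetic energy, and it is $L^\infty$-bounded because the norm on $V$ is rotation invariant. Pass to a subsequence with $\theta_k\to\theta$. Lemma~\ref{lemma:direct method compactness} then gives a uniform and weak-$H^1$ limit $x$ with $A_T(x)\le m$, and $x(T)=e^{i\theta}x(0)$. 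Projecting to $\mathcal C$ gives a closed loop $\gamma$. Alternatively, one can run Arzel\`a--Ascoli directly in $\mathcal C$, which is locally compact as the quotient of a Euclidean space by a compact group.

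\textbf{Excluding collisions and staying in the class.} By the strong-force condition, any limit meeting the collision locus has infinite action, while $A_T(\gamma)\le m<\infty$. So $\gamma\subset\mathcal C^*$. Because the limit is collision-free and the convergence is uniform, the curves $\gamma^{[k]}$ eventually stay in a compact subset of $\mathcal C^*$. Uniformly close loops there are freely homotopic, so $\gamma\in\alpha$ and $A_T(\gamma)=m$.

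\textbf{Minimizer is a solution.} Since $\alpha$ is open in the $C^0$ topology on loops in $\mathcal C^*$, $\gamma$ is an interior minimizer. First variations supported on subintervals show it solves the reduced Euler--Lagrange equations. Variations supported near $t=0$ show it is $T$-periodic and smooth. It is therefore a collision-free reduced periodic solution in $\alpha$.

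\textbf{Main obstacle.} The delicate step is the second one: converting tiedness, which is phrased through lengths in the metric space, into the uniform $L^\infty$ bound needed for Lemma~\ref{lemma:direct method compactness}. This requires the action-to-length estimate and care with the quotient geometry near the singular points of $\mathcal C$ (total collision and the origin). The strong-force exclusion of collisions, by contrast, is immediate from infinite action.
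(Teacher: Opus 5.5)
Your argument is correct, and it is essentially the direct-method scheme the paper runs in the proof of Theorem~\ref{strong force}; the paper gives no proof of this statement and only cites Montgomery. The ingredients match: a Cauchy--Schwarz length bound, an $L^\infty$ bound (obtained from tiedness instead of from the twisted boundary condition), Lemma~\ref{lemma:direct method compactness} applied to horizontal lifts, collision exclusion by infinite action, and uniform closeness to stay in the class. One small correction: in the center-of-mass frame total collision \emph{is} the origin, the only point where the $SO(2)$-action fails to be free, so horizontal lifts of loops in $\mathcal C^*$ pose no difficulty; and if you want to use the paper's straight-line homotopy in $V$, first replace $x^{[k]}$ by $e^{i(\theta-\theta_k)t/T}x^{[k]}(t)$ so that both curves have the same endpoint rotation.
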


Whereas Montgomery works with ordinary loops in the reduced
configuration space $\mathcal C$, we work directly in $V$ with
$\beta$-loops, that is, curves satisfying
$$x(T)=\exp(i\beta)x(0).$$
Our result extends his variational existence theorem to free homotopy
classes of $\beta$-loops. No separate tiedness assumption is
needed when $\beta\notin2\pi\mathbb Z$, since the twisted boundary
condition itself implies that every family of $\beta$-loops is tied to
$\varnothing$.

Before stating our result, we introduce the corresponding notion of
homotopy. Two $\beta$-loops $x^{[1]}$ and $x^{[2]}$ are said to be
\emph{freely $\beta$-homotopic} if there exists a continuous map
$$H:[0,1]\times[0,T]\to V^*$$
such that
$$H(0,t)=x^{[1]}(t),\qquad H(1,t)=x^{[2]}(t),$$
and
$$H(s,T)=e^{i\beta}H(s,0)$$
for every $s\in[0,1]$.

\

We can then state our main theorem for strong-force potential.

\begin{theorem}\label{strong force}
For every $\beta\notin2\pi\mathbb Z$, let $\alpha$ be a free
$\beta$-homotopy class of $\beta$-loops in $V^*$. Suppose that the
$n$-body potential satisfies the strong-force condition. Then there
exists a collision-free solution $y^*$ representing $\alpha$. The
solution $y^*$ is obtained by minimizing the action $A_T$ over $\alpha$.

Furthermore, $y^*$ extends to $\mathbb R$ by
$$
y^*(t+kT)=\exp(ik\beta)y^*(t),
\qquad t\in[0,T],\quad k\in\mathbb Z,
$$
and this extension is a $\beta$-periodic solution.
\end{theorem}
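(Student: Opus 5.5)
We will use the direct method on the class $\alpha$, viewed as a subset of $H^1$ $\beta$-loops in $V^*$. Let $c_\alpha:=\inf_{y\in\alpha}A_T(y)$. This infimum is finite, since any representative of $\alpha$ can be replaced by a piecewise smooth one in the same class, and such a curve has finite action. Take a minimizing sequence $(y^{[k]})\subset\alpha$ with $A_T(y^{[k]})\le c_\alpha+1$. The kinetic parts are then uniformly bounded, and by \eqref{eq: equicontinuity estimate} the diameter of each curve is bounded by $\sqrt{2T(c_\alpha+1)/m_{\min}}=:D$.

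The key step, replacing tiedness, is a uniform $L^\infty$ bound obtained from the twisted boundary condition. For a $\beta$-loop,
$$\norm{y(T)-y(0)}=\norm{(e^{i\beta}-1)y(0)}=|e^{i\beta}-1|\,\norm{y(0)},$$
where the norm on $V$ is, for instance, the mass-weighted one, which is invariant under rotation. Since $\beta\notin2\pi\mathbb Z$, we have $|e^{i\beta}-1|=2|\sin(\beta/2)|>0$. Hence
$$\norm{y^{[k]}(0)}\le D/|e^{i\beta}-1|,$$
and therefore $\norm{y^{[k]}}_\infty\le D+D/|e^{i\beta}-1|$ uniformly in $k$. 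This is exactly the sense in which every family of $\beta$-loops is tied to $\varnothing$. (The center of mass is already fixed at $0$ in $V$, so no translation issue arises.)

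Lemma~\ref{lemma:direct method compactness} then gives a subsequence with $y^{[k]}\to y^*$ weakly in $H^1$ and uniformly, and $A_T(y^*)\le c_\alpha$. The boundary condition $y^*(T)=e^{i\beta}y^*(0)$ passes to the uniform limit. Since $A_T(y^*)<\infty$, the strong-force remark preceding the theorem implies that $y^*$ has no collisions, so $y^*$ takes values in $V^*$.

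The main obstacle is showing that $y^*$ lies in the same class $\alpha$, and here compactness does the work. The image $y^*([0,T])$ is compact in $V^*$, so it has positive distance $\delta$ from the collision set. For $k$ large, $\norm{y^{[k]}-y^*}_\infty<\delta/2$, and the straight-line homotopy
$$H(s,t)=(1-s)y^{[k]}(t)+s\,y^*(t)$$
stays in $V^*$. It also satisfies $H(s,T)=e^{i\beta}H(s,0)$, because the rotation is linear and both endpoints of the homotopy are $\beta$-loops. Hence $y^*\in\alpha$ and $A_T(y^*)=c_\alpha$.

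Next we show that $y^*$ solves the equations of motion. Since $\alpha$ is open in the $C^0$ topology on $\beta$-loops, $y^*$ is a local minimizer among all $H^1$ $\beta$-loops. Take variations $y^*+\varepsilon h$ with $h$ an $H^1$ $\beta$-loop, i.e. $h(T)=e^{i\beta}h(0)$. Vanishing of the first variation with compactly supported $h$ gives \eqref{equation of motion} on $(0,T)$, and $y^*$ is $C^2$ there by the usual bootstrap, since $U$ is $C^2$ on $V^*$. For general $h$, integration by parts leaves the boundary term
$$\langle \dot y^*(T),h(T)\rangle-\langle \dot y^*(0),h(0)\rangle=\langle e^{-i\beta}\dot y^*(T)-\dot y^*(0),h(0)\rangle$$
in the mass inner product, using that rotations are isometries. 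Since $h(0)$ is arbitrary, this forces $\dot y^*(T)=e^{i\beta}\dot y^*(0)$.

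Finally, define the extension $y^*(t+kT)=e^{ik\beta}y^*(t)$. It is continuous with matching velocities at each $kT$. By rotation invariance of $U$ (equivariance of $\partial U$), each rotated piece solves \eqref{equation of motion}. Uniqueness for the Euler--Lagrange flow, whose initial data match at each junction, then shows that the extension is a global $\beta$-periodic solution.
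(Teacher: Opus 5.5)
Your proposal is correct and follows the paper's proof step by step: the twisted boundary condition gives $\norm{y^{[k]}(0)}\le \ell(y^{[k]})/(2|\sin(\beta/2)|)$ in place of tiedness, Lemma~\ref{lemma:direct method compactness} supplies the limit, the strong-force condition excludes collisions, and the straight-line $\beta$-homotopy keeps the limit in $\alpha$. The only difference is the last step: you derive $\dot y^*(T)=e^{i\beta}\dot y^*(0)$ from twisted first variations and glue by ODE uniqueness (the argument the paper itself uses in Proposition~\ref{prop: collision-free minimizer is solution}), whereas here the paper uses rotation invariance of $L$ to show the extension is a minimizer on every shifted window $[\epsilon,T+\epsilon]$, hence a solution across the junctions, and both routes are valid.
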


\begin{proof}
Fix $\beta\notin2\pi\mathbb Z$ and let $\alpha$ be a free
$\beta$-homotopy class of $\beta$-loops in $V^*$. Let
$(y^{[k]})_{k\in\mathbb N}\subset\alpha$ be an action-minimizing
sequence.

Since $U\geq0$ and the actions $A_T(y^{[k]})$ are uniformly bounded,
the kinetic energies
$$\int_0^T K(y^{[k]}(t),\dot y^{[k]}(t))\,dt$$
are uniformly bounded. By the same Cauchy--Schwarz estimate as in
\eqref{eq: equicontinuity estimate},
$$\ell(y^{[k]})
=
\int_0^T\norm{\dot y^{[k]}(t)}\,dt
\leq
\sqrt{T}
\left(
\frac{2}{m_{\min}}
\int_0^T K(y^{[k]}(t),\dot y^{[k]}(t))\,dt
\right)^{1/2},$$
so the lengths $\ell(y^{[k]})$ are uniformly bounded.

Since each $y^{[k]}$ is a $\beta$-loop,
\begin{equation}
\begin{split}
\ell(y^{[k]})
&\geq
\norm{y^{[k]}(0)-y^{[k]}(T)}\\
&=
\norm{y^{[k]}(0)-\exp(i\beta)y^{[k]}(0)}\\
&=
2\norm{y^{[k]}(0)}
\left|\sin\left(\frac{\beta}{2}\right)\right|.
\end{split}
\end{equation}
Since $\beta\notin2\pi\mathbb Z$, this shows that
$y^{[k]}(0)$ is uniformly bounded. Furthermore, for every
$t\in[0,T]$,
$$\norm{y^{[k]}(t)}
\leq
\norm{y^{[k]}(0)}
+
\norm{y^{[k]}(t)-y^{[k]}(0)}
\leq
\norm{y^{[k]}(0)}+\ell(y^{[k]}).$$
Therefore,
$$\sup_{k\in\mathbb N}\norm{y^{[k]}}_\infty<\infty.$$

Since the hypotheses of Lemma~\ref{lemma:direct method compactness} are
satisfied, after passing to a subsequence, still denoted by
$(y^{[k]})_{k\in\mathbb N}$, we obtain a $\beta$-loop $y^*$ such
that
$$y^{[k]}\longrightarrow y^*$$
uniformly on $[0,T]$, and
$$A_T(y^*)\leq\liminf_{k\to\infty}A_T(y^{[k]}).$$
Once we verify that $y^*\in\alpha$, it follows that $y^*$ attains the
infimum of the action over $\alpha$.

By the strong-force condition, every curve containing a collision has
infinite action. Since $A_T(y^*)<\infty$, the curve $y^*$ is
collision-free.

Since $y^*$ is
collision-free and $[0,T]$ is compact, its image has positive distance
from the collision set. Since $y^{[k]}$ converges uniformly to $y^*$,
for sufficiently large $k$ the homotopy
$$
H(s,t)
=
(1-s)y^{[k]}(t)+sy^*(t),
\qquad
s\in[0,1],
$$
is collision-free. Moreover, since both $y^{[k]}$ and $y^*$ are
$\beta$-loops,
$$
H(s,T)=\exp(i\beta)H(s,0)
$$
for every $s\in[0,1]$. Thus $H$ is a $\beta$-homotopy, and therefore
$y^*\in\alpha$.

We now extend every curve $y\in\alpha$ to $\mathbb R$ by
$$
y(t+kT)=\exp(ik\beta)y(t),
\qquad t\in[0,T],\quad k\in\mathbb Z.
$$
Since the Lagrangian is invariant under rotation, for every
$y\in\alpha$ and every $\epsilon\in\mathbb R$,
$$
\int_\epsilon^{T+\epsilon}
L(y(t),\dot y(t))\,dt
=
A_T(y).
$$
Therefore, $y^*$ remains an action minimizer over $\alpha$ on every
interval $[\epsilon,T+\epsilon]$.

Since $y^*$ is collision-free, $y^*$ is a solution on $(\epsilon,T+\epsilon)$ for every $\epsilon\in\mathbb R$ and 
hence the extension is a $\beta$-periodic solution.
\end{proof}

%% file: part4.tex
\subsection{Symmetry setting and main results}

In this section, we consider the $\alpha$-homogeneous potentials
$$
U(x)=\sum_{1\leq i<j\leq n}
\frac{m_i m_j}{\norm{x_i-x_j}^{\alpha}},
\qquad 0<\alpha<2,
$$
which include the Newtonian potential when $\alpha=1$.
In this range, the strong-force condition is no longer satisfied, and
excluding collisions of action-minimizing trajectories becomes a major
difficulty. We will show how variational existence results for loops can
be extended to the corresponding rotated classes.

Given a family of loops $C\subset\lm$ and a fixed rotation angle
$\beta\in\mathbb R$, we define
$$
C_\beta
:=
\left\{
\exp\left(i\frac{\beta}{T}t\right)x(t):x\in C
\right\}.
$$
We also define
$$\lm_\beta:=\left\{y\in H^1([0,T],V):y(T)=\exp(i\beta)y(0)\right\},$$
and, for $x\in\lm$, the rotated action
$$A_\beta(x):=A_T\left(\exp\left(i\frac{\beta}{T}t\right)x(t)\right).$$
In this way, minimizing $A_T$ over $C_\beta$ is equivalent to minimizing
$A_\beta$ over $C$. We will use these two equivalent formulations as
needed in the proofs.

We are interested in proving the existence of solutions of the
$n$-body equations by minimizing the action over rotated classes
subject to symmetry constraints and possibly additional variational
constraints. Such symmetry constraints can be described through the
equivariant framework of group actions; see
\cite{ferrario2004existence} and the references therein.
We now introduce this symmetry framework.

Let $\mathbb R/T\mathbb Z$
denote the time circle, naturally identified with a circle in
$\mathbb R^2$. Let $H$ be a finite group. Let
$
\tau:H\rightarrow O(2)
$
be an orthogonal representation inducing an action on the time circle,
let
$
\rho:H\rightarrow O(2)
$
be an orthogonal representation on the plane, and let
$
\sigma:H\rightarrow S_n
$
be a group homomorphism satisfying
$$
\sigma(h)(i)=j
\quad\Longrightarrow\quad
m_i=m_j.
$$
The group $H$ then acts on $V$ by
$$
h(x_1,\ldots,x_n)
=
\left(
\rho(h)x_{\sigma(h^{-1})(1)},
\ldots,
\rho(h)x_{\sigma(h^{-1})(n)}
\right).
$$
By identifying every loop $x\in\lm$ with its $T$-periodic extension,
or equivalently with a map on $\mathbb R/T\mathbb Z$, the expression
$$
x(\tau(h)t)
$$
is well-defined. We may therefore define an action of $H$ on $\lm$ by
$$
(h\cdot x)(t)
=
h\left(x\left(\tau(h^{-1})t\right)\right).
$$

In this way, the symmetry constraints are encoded by the action of $H$. The loops satisfying these constraints are precisely the loops fixed by $H$. We denote the corresponding fixed-point spaces by
$$
V^H:=\left\{v\in V:hv=v\text{ for all }h\in H\right\}
$$
and
$$
\lm^H:=\left\{x\in\lm:h\cdot x=x\text{ for all }h\in H\right\}.
$$
We call a subset $D\subset\lm^H$ \emph{$H$-admissible} if:
\begin{enumerate}
    \item $D$ is closed in the weak $H^1$ topology on $\lm^H$.
    \item Every collision-free $x\in D$ is an interior point of $D$ in the strong $H^1$ topology relative to $\lm^H$.
\end{enumerate}

Moreover, following \cite{Barutello2008Symmetry}, we say that $H$ is of \emph{type R} if
$$
\det(\rho(h))\det(\tau(h))=1\qquad\text{for every }h\in H.
$$
Together with $H$-admissibility, the type~R condition allows collision-free constrained minimizers in the rotated classes to yield genuine solutions; its precise role and sharpness are established in Proposition~\ref{prop: collision-free minimizer is solution}.

We now state the main results:

\begin{theorem}\label{theorem: weak force, glocal minimizer version}
Let $H$ be a finite group acting on $\lm$ through $\rho$, $\tau$, and $\sigma$, and let $D\subset\lm^H$ be an $H$-admissible subset containing a curve of finite action. Define
$$\mathcal M_\beta:=\left\{x\in D_\beta:A_T(x)=\inf_{z\in D_\beta}A_T(z)\right\}.$$
Suppose that the following conditions are satisfied:
\begin{enumerate}
\item $V^H=\{0\}$.
\item Every element of $\mathcal M_0$ is collision-free.
\item $H$ is of type~R.
\end{enumerate}
Then there exists $\theta>0$ such that, for every $\beta\in\mathbb R$ with $|\beta|<\theta$, the action $A_T$ attains its minimum on $D_\beta$, every minimizer is collision-free, and each $y_\beta\in\mathcal M_\beta$ extends to a solution on $\mathbb R$ by
$$y_\beta(t+T)=\exp(i\beta)y_\beta(t).$$
Moreover,
$$\sup_{y_\beta\in\mathcal M_\beta}\operatorname{dist}_{H^1}(y_\beta,\mathcal M_0)\longrightarrow0\qquad\text{as }\beta\longrightarrow0.$$
\end{theorem}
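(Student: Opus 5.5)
We work throughout with the equivalent formulation on the unrotated class: minimizing $A_T$ over $D_\beta$ is the same as minimizing $A_\beta$ over $D$. Writing $y(t)=\exp(i\frac{\beta}{T}t)x(t)$, we have $\dot y=\exp(i\frac{\beta}{T}t)\bigl(\dot x+i\frac{\beta}{T}x\bigr)$, and rotation invariance of $U$ gives
$$
A_\beta(x)=A_T(x)+\frac{\beta}{T}\int_0^T\sum_i m_i\langle \dot x_i,ix_i\rangle\,dt+\frac{\beta^2}{2T^2}\int_0^T\sum_i m_i\norm{x_i}^2\,dt .
$$
The plan has four steps.

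\textbf{Step 1 (coercivity).} Since $V^H=\{0\}$, the $H$-average of any $x\in\lm^H$ is a point of $V$ fixed by $H$, hence zero; note that averaging over the finite group includes the time reparametrization. A Wirtinger/Poincaré-type argument then gives $\norm{x}_{L^2}^2\le C\norm{\dot x}_{L^2}^2$ on $\lm^H$. Consequently the cross term is bounded by $C|\beta|\,\norm{\dot x}_{L^2}^2$, so for $|\beta|\le\theta_0$ we get $A_\beta(x)\ge \frac12 A_T(x)$, and also $\norm{x}_\infty\lesssim\norm{\dot x}_{L^2}$. Hence minimizing sequences for $A_\beta$ in $D$ have bounded kinetic energy and bounded $L^\infty$ norm. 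Lemma~\ref{lemma:direct method compactness}, together with the weak closedness of $D$ and the weak continuity of the two extra terms (one is a product of a weakly and a strongly convergent factor, the other converges by uniform convergence), shows that $A_\beta$ attains its infimum $c_\beta$ on $D$.

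\textbf{Step 2 (continuity of the infimum and a compactness argument).} Take a fixed $x_0\in\mathcal M_0$ as a test path; this gives $\limsup_{\beta\to0}c_\beta\le c_0$. Conversely, the bounds in Step 1 give $A_\beta\ge A_T-O(|\beta|)(1+A_T)$, so $c_\beta\to c_0$. Now argue by contradiction. Take $\beta_k\to0$ and minimizers $x_k$ of $A_{\beta_k}$ such that either $x_k$ has a collision or $\operatorname{dist}_{H^1}(x_k,\mathcal M_0)\ge\varepsilon$. These have bounded action, so by the Lemma a subsequence converges weakly in $H^1$ and uniformly to some $x\in D$ with $A_T(x)\le\liminf A_{\beta_k}(x_k)=c_0$. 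Hence $x\in\mathcal M_0$, and $x$ is collision-free by hypothesis~(2). Since $A_T(x_k)=A_{\beta_k}(x_k)+o(1)\to A_T(x)$, the strong-convergence clause of the Lemma gives $x_k\to x$ in $H^1$. Uniform convergence to a collision-free curve forces $x_k$ to be collision-free for large $k$, and the $H^1$ convergence contradicts the distance bound. This proves both that all minimizers are collision-free for small $|\beta|$ and the convergence $\sup_{\mathcal M_\beta}\operatorname{dist}_{H^1}(\cdot,\mathcal M_0)\to0$ (the map $x\mapsto y$ is $H^1$-close to the identity for small $\beta$).

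\textbf{Step 3 (minimizers are solutions).} This is the main obstacle. A collision-free minimizer $x$ lies in the $H^1$-interior of $D$ relative to $\lm^H$ by admissibility, so it is a critical point of $A_\beta$ restricted to $\lm^H$. To upgrade this to a critical point on all of $\lm$ we use Palais' symmetric criticality, which requires $A_\beta$ to be $H$-invariant. Under $h$, the cross term $\int\langle\dot x,ix\rangle$ picks up the factor $\det\rho(h)\det\tau(h)$: a reflection in the plane reverses the symplectic form $\langle\cdot,i\cdot\rangle$, and a time reflection reverses $\dot x$. Type~R is exactly what makes this factor equal to $1$, so $A_\beta$ is $H$-invariant and symmetric criticality applies. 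The result is that $x$ solves the rotating-frame equations, and therefore $y=\exp(i\frac{\beta}{T}t)x$ solves the equations of motion~\eqref{equation of motion} on $(0,T)$. Presumably the paper's Proposition~\ref{prop: collision-free minimizer is solution} packages exactly this step.

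\textbf{Step 4 (extension to $\mathbb R$).} It remains to check that $y$ extends smoothly across $t=T$ via $y(t+T)=\exp(i\beta)y(t)$. Since $x$ is a critical point among all $T$-periodic loops, the boundary terms in the first variation vanish, which matches the velocities at $0$ and $T$. Alternatively, one can use the shift-invariance argument from the proof of Theorem~\ref{strong force}. Finally, take $\theta$ to be the minimum of $\theta_0$ and the threshold from Step 2.
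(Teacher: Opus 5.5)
Your proof is correct. Steps~1, 3 and~4 follow the paper closely. Coercivity comes from $\int_0^T x\,dt\in V^H=\{0\}$ together with Wirtinger's inequality. The paper's Proposition~\ref{prop: collision-free minimizer is solution} is exactly your Steps~3--4, with your determinant-factor observation appearing there as $A_\beta(h\cdot x)=A_{\chi_h\beta}(x)$, where $\chi_h=\det\rho(h)\det\tau(h)$.

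The real difference is how collisions are excluded. The paper sets $m_{\mathrm{col}}:=\inf\{A_T(c):c\in D\text{ colliding}\}$ and shows by compactness that $m_{\mathrm{col}}\geq A_T(x_0)+\epsilon$. It then uses the explicit two-sided estimate $\delta(\beta_1,\beta_2,K_1)$ of Proposition~\ref{prop: estimate of action after applying rotation} to push the action of the rotated-back minimizer $\exp(-i\beta t/T)y_\beta$ below $m_{\mathrm{col}}$. Convergence to $\mathcal M_0$ is then a separate compactness step.

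You fold collision-freeness into that compactness step instead. Any sequence of minimizers with $\beta_k\to0$ has a subsequence converging strongly in $H^1$, hence uniformly, to a collision-free element of $\mathcal M_0$, so its members are eventually collision-free. For this you need only the crude bound $A_\beta\geq A_T-O(|\beta|)(1+A_T)$ on $\lm^H$.

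Your route is shorter and dispenses with the gap argument, but it is purely qualitative. The paper's gap-plus-explicit-$\delta$ formulation is precisely what produces the computable threshold $\theta=2\pi(\sqrt{\mathcal A_*}-\sqrt{\mathcal A_{\mathrm{test}}})/(\sqrt{\mathcal A_*}+\sqrt{\mathcal A_{\mathrm{test}}})$ of Corollary~\ref{corollary: explicit value of theta in glocal theorem}, which is used in the applications. The paper also gets existence of minimizers for all $|\beta|<2\pi$ from Proposition~\ref{prop: uniform coercivity}; you only need and obtain it for $|\beta|\leq\theta_0$.

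One small caution concerns the shift-invariance alternative in your Step~4. It is legitimate only after Step~3, once $y$ is known to be critical on all of $\lm_\beta$. The set $D_\beta$ itself is generally not invariant under time shifts when $\tau(H)$ contains reflections. So the argument from Theorem~\ref{strong force} cannot be run at the level of the constrained minimization.
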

Note that the preceding theorem only guarantees that $y_\beta$
approaches the set $\mathcal M_0$, rather than a prescribed minimizer
$x_0\in\mathcal M_0$.

Moreover, a given symmetry group $H$ may fail to be
of type~R, in which case one may need to replace it by a subgroup.
Since this enlarges the corresponding fixed-point space, a solution
which is globally minimizing under the original symmetry constraints
need no longer be globally minimizing under the reduced symmetry group.
These two issues motivate the following local version of the theorem.

\begin{theorem}\label{theorem: weak force, local minimizer version}
Let $H$ be a finite group acting on $\lm$ through
$\rho$, $\tau$, and $\sigma$, and let $D\subset\lm^H$
be an $H$-admissible subset. Suppose that the following conditions
are satisfied:
\begin{enumerate}
\item There exists $x_0\in D$ which is a collision-free strict
$H^1$-local minimizer of $A_T$ over $D$.
\item $H$ is of type~R.
\end{enumerate}
Then there exists $\theta>0$ such that, for every
$\beta\in\mathbb R$ with $|\beta|<\theta$, there exists
$y_\beta\in D_\beta$ which is a collision-free $H^1$-local minimizer
of $A_T$ over $D_\beta$ and extends to a solution on $\mathbb R$
by
$$y_\beta(t+T)=\exp(i\beta)y_\beta(t).$$
Moreover,
$$\norm{y_\beta-x_0}_{H^1}\longrightarrow0\qquad\text{as }\beta\longrightarrow0.$$
\end{theorem}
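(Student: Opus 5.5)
We work in the rotated formulation and minimize $A_\beta$ over a small closed ball of $D$ around $x_0$. Since $x_0$ is a strict local minimizer, there is $r>0$ such that $A_T(x)>A_T(x_0)$ for every $x\in D$ with $0<\norm{x-x_0}_{H^1}\le r$. We may shrink $r$ so that the strong $H^1$-ball $B_r(x_0)\cap\lm^H$ lies in $D$, using admissibility~(2). Since $H^1$-closeness implies uniform closeness, we may also shrink $r$ so that every curve in this ball stays a fixed distance $\delta>0$ from the collision set.

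We use the pointwise expansion
\[
A_\beta(x)=A_T(x)+\frac{\beta}{T}\int_0^T\sum_i m_i\, \dot x_i\cdot (i x_i)\,dt+\frac{\beta^2}{2T^2}\int_0^T\sum_i m_i\norm{x_i}^2dt.
\]
This holds because $U$ is rotation invariant and $\frac{d}{dt}(e^{i\beta t/T}x)=e^{i\beta t/T}(\dot x+i\frac{\beta}{T}x)$. Hence $|A_\beta(x)-A_T(x)|\le C|\beta|$ uniformly on bounded $H^1$ sets, with $C$ depending on the $H^1$ bound.

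\emph{Step 1 (existence of a minimizer on the closed ball).} The set $\bar B:=\{x\in D:\norm{x-x_0}_{H^1}\le r\}$ is weakly closed, since $D$ is weakly closed and norm balls are weakly closed. It is also bounded. A minimizing sequence for $A_\beta$ on $\bar B$ therefore has a subsequence converging weakly in $H^1$ and uniformly, by Lemma~\ref{lemma:direct method compactness} with the $L^\infty$ bound coming from the $H^1$ bound. Weak lower semicontinuity of $A_\beta$ follows as in that lemma: the kinetic term $\frac12\sum m_i\norm{\dot x_i+i\frac{\beta}{T}x_i}^2$ is convex in $\dot x$, and $x$ converges uniformly. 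This gives a minimizer $x_\beta\in\bar B$, which is collision-free by the choice of $r$.

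\emph{Step 2 (the minimizer is interior and converges to $x_0$).} We argue by contradiction. Suppose that along some $\beta_k\to0$ we had $\norm{x_{\beta_k}-x_0}_{H^1}\ge\epsilon$. Passing to a weakly convergent subsequence gives a limit $\bar x\in\bar B$. Using $A_{\beta_k}(x_{\beta_k})\le A_{\beta_k}(x_0)\to A_T(x_0)$, the uniform bound $|A_\beta-A_T|\le C|\beta|$ and lower semicontinuity, we get $A_T(\bar x)\le A_T(x_0)$, so strict minimality forces $\bar x=x_0$. Moreover $A_T(x_{\beta_k})\to A_T(x_0)$, so the last part of Lemma~\ref{lemma:direct method compactness} upgrades this to strong $H^1$ convergence $x_{\beta_k}\to x_0$, a contradiction. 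Hence $\norm{x_\beta-x_0}_{H^1}\to0$. In particular $x_\beta$ lies in the open ball for $|\beta|<\theta$, and is therefore an $H^1$-local minimizer of $A_\beta$ over $D$ and over $\lm^H$. Setting $y_\beta:=e^{i\beta t/T}x_\beta$ gives a local minimizer of $A_T$ over $D_\beta$ with $\norm{y_\beta-x_0}_{H^1}\le\norm{x_\beta-x_0}_{H^1}+O(|\beta|)\to0$.

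\emph{Step 3 (from constrained minimizer to solution).} This is the main obstacle. The point $y_\beta$ is only critical for variations in $(\lm^H)_\beta$, that is, in the rotating frame among $H$-equivariant loops. We must show it is critical for all variations, so that Palais' principle of symmetric criticality applies. This requires $A_\beta$ to be $H$-invariant on $\lm$. The potential and the $\norm{\dot x}^2,\norm{x}^2$ terms are invariant under $\rho,\tau,\sigma$ since the masses are preserved. The Coriolis term $\int\sum m_i\,\dot x_i\cdot(ix_i)$ transforms by the factor $\det\rho(h)\det\tau(h)$: a reflection in space reverses $i$, and time reversal reverses $\dot x$. The type~R condition is exactly what makes this factor equal to $1$, so $A_\beta$ is $H$-invariant, and this is the content of Proposition~\ref{prop: collision-free minimizer is solution}. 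With that, $x_\beta$ is a critical point of $A_\beta$ on $\lm$, so it solves the rotating-frame equations on $[0,T]$ periodically. Undoing the rotation, $y_\beta$ solves \eqref{equation of motion} with $y_\beta(T)=e^{i\beta}y_\beta(0)$ and matching velocities. The same shift-invariance argument as in the proof of Theorem~\ref{strong force} then shows that the extension $y_\beta(t+T)=\exp(i\beta)y_\beta(t)$ is a solution on $\mathbb R$.
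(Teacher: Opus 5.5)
Your proposal is correct and follows the paper's proof essentially step by step: you minimize $A_\beta$ over a small closed $H^1$-ball in $\lm^H$ about $x_0$ that lies in $D$ and stays away from collisions, use the uniform $O(|\beta|)$ closeness of $A_\beta$ and $A_T$ on that ball, combine Lemma~\ref{lemma:direct method compactness} with strict minimality, and conclude via type~R and Palais's principle. The only difference is the order of the steps: the paper first extracts an action gap $\epsilon$ on the sphere $\norm{x-x_0}_{H^1}=r$ and takes $2\delta_{B_r}(\beta)<\epsilon$ to get interiority, then proves convergence, whereas you first prove by contradiction that all such minimizers converge to $x_0$ in $H^1$ and read off interiority from that, which saves one compactness argument.
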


\subsection{Proofs of the main results}
We now establish the main ingredients needed for the proofs of the two
preceding theorems. As usual, we say that $A_T$ is coercive on a
function space $C$ if a uniform bound on the action over $C$ implies
a uniform bound in the $H^1$-norm.

\begin{prop}\label{prop: uniform coercivity}
Let $H$ be a finite group acting on $\lm$ through
$\rho,\tau,\sigma$. Assume that $\lm^H$ contains a curve
of finite action. The following conditions are equivalent:
\begin{enumerate}
\item[(i)] $V^H=\{0\}$.
\item[(ii)] $A_T$ is coercive on $\lm^H$.
\item[(iii)] $A_T$ is coercive on
$\bigcup\limits_{|\beta|\leq 2\pi-\epsilon}(\lm^H)_\beta$ for any $\epsilon>0$.
\end{enumerate}
\end{prop}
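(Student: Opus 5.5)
We prove the cycle (i)$\Rightarrow$(iii)$\Rightarrow$(ii)$\Rightarrow$(i). The implication (iii)$\Rightarrow$(ii) is immediate: take $\beta=0$ in the union, since $(\lm^H)_0=\lm^H$.

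\emph{(ii)$\Rightarrow$(i).} We argue by contradiction. Suppose $0\neq v\in V^H$, and let $x\in\lm^H$ have finite action. The constant loop $v$ lies in $\lm^H$, because $H$ acts on time only through $\tau$. Hence $x+sv\in\lm^H$ for every $s\in\mathbb R$. Its kinetic action equals that of $x$.

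For the potential, consider each pair $i<j$:
\begin{itemize}
\item if $v_i=v_j$, the pair term in $U$ is unchanged;
\item if $v_i\neq v_j$, then $\norm{x_i-x_j+s(v_i-v_j)}\geq s\norm{v_i-v_j}-2\norm{x}_\infty\to\infty$, so that pair term tends to $0$ uniformly in $t$.
\end{itemize}
Therefore $A_T(x+sv)\leq A_T(x)+C$ for all large $s$. On the other hand, $\norm{x+sv}_{H^1}\to\infty$, which contradicts coercivity.

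\emph{(i)$\Rightarrow$(iii).} Let $y(t)=e^{i\beta t/T}x(t)$ with $x\in\lm^H$, $|\beta|\leq2\pi-\epsilon$, and $A_T(y)\leq C_0$.

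First, since $U\geq0$, the kinetic action of $y$ is bounded. The Cauchy--Schwarz estimate \eqref{eq: equicontinuity estimate} then bounds $\norm{\dot y}_{L^2}$ and hence the length $\ell:=\ell(y)$.

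Second, the time average $\bar x=\frac1T\int_0^T x\,dt$ is $H$-invariant. Indeed, $\tau(h)$ acts on the time circle as an isometry, so it preserves Lebesgue measure, and $h$ acts linearly on $V$. Hence $\bar x\in V^H=\{0\}$.

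Third, we bound $y$ uniformly; this is the main step, and it is where $\epsilon$ enters. Since $\norm{y(t)-y(0)}\leq\ell$ for all $t$, and rotations are isometries of $V$, we get
$$\norm{x(t)-e^{-i\beta t/T}y(0)}\leq\ell\qquad\text{for all }t\in[0,T].$$
Averaging over $[0,T]$ and using $\bar x=0$ gives
$$\left|\frac1T\int_0^Te^{-i\beta t/T}\,dt\right|\,\norm{y(0)}\leq\ell.$$
The scalar on the left equals $\left|\dfrac{\sin(\beta/2)}{\beta/2}\right|$. On $|\beta|\leq2\pi-\epsilon$ it is bounded below by a constant $c_\epsilon>0$. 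Hence $\norm{y(0)}\leq\ell/c_\epsilon$, and therefore $\norm{y}_\infty\leq\ell/c_\epsilon+\ell$.

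Combining this $L^\infty$ bound with the $L^2$ bound on $\dot y$ gives a uniform $H^1$ bound for $y$, with constant depending only on $C_0$ and $\epsilon$. This is coercivity on the union in (iii).

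The only delicate point is the lower bound on the averaged rotation factor $\frac{\sin(\beta/2)}{\beta/2}$. It degenerates exactly at $\beta=\pm2\pi$, which explains why the statement restricts to $|\beta|\leq2\pi-\epsilon$.
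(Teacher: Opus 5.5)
Your proof is correct. For (iii)$\Rightarrow$(ii) and (ii)$\Rightarrow$(i) you follow the paper by translating a finite-action loop by $sv$ with $0\neq v\in V^H$. Your pair-by-pair treatment of the potential supplies the justification the paper leaves implicit when it asserts $A_T(x^{[k]})\leq A_T(x)$ for large $k$.

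For (i)$\Rightarrow$(iii), your route is genuinely different. The paper uses $\int_0^T x\,dt=0$ to invoke Wirtinger's inequality $\norm{x}_{L^2}\leq\frac{T}{2\pi}\norm{\dot x}_{L^2}$. It combines this with $\norm{\dot y}_{L^2}\geq\bigl(1-\frac{|\beta|}{2\pi}\bigr)\norm{\dot x}_{L^2}$ to reach the $L^2$ bound $\norm{y}_{L^2}\leq\frac{T}{2\pi-|\beta|}\norm{\dot y}_{L^2}$. You avoid Wirtinger altogether: averaging $\norm{x(t)-e^{-i\beta t/T}y(0)}\leq\ell$ against the zero-mean condition gives $\bigl|\frac{\sin(\beta/2)}{\beta/2}\bigr|\norm{y(0)}\leq\ell$. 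This is an averaged form of the endpoint estimate $\ell\geq 2|\sin(\beta/2)|\norm{y(0)}$ from the proof of Theorem~\ref{strong force}, with the zero-mean condition removing the degeneracy at $\beta=0$.

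Your argument is more elementary, and it yields a sup-norm bound directly, which is exactly what Lemma~\ref{lemma:direct method compactness} uses. The paper's route buys the sharp constant $T/(2\pi-|\beta|)$. That constant is reused in Proposition~\ref{prop: estimate of action after applying rotation} to produce the explicit $\delta(\beta_1,\beta_2,K_1)$, and through it the closed-form $\theta$ in Corollary~\ref{corollary: explicit value of theta in glocal theorem}. Your constants are fully adequate for coercivity, but would give much weaker explicit rotation bounds downstream. For instance, at $\beta=0$ you get $\norm{y}_{L^2}\leq 2T\norm{\dot y}_{L^2}$, versus the paper's $\frac{T}{2\pi}$.
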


\begin{proof}
The core idea of the proof comes from Proposition~4.1 of
\cite{ferrario2004existence}, which establishes the
equivalence of $(i)$ and $(ii)$. We extend the argument to include
$(iii)$. The implication $(iii)\implies(ii)$ is immediate. It therefore
remains to prove $(ii)\implies(i)$ and $(i)\implies(iii)$.

$(ii)\implies(i)$:
Suppose, for contradiction, that $V^H\neq\{0\}$, and choose
$0\neq v\in V^H$. For any $x\in\lm^H$ with finite action, define
$$
x^{[k]}(t):=x(t)+kv.
$$
Then $(x^{[k]})_{k\in\mathbb N}\subset\lm^H$, while
$$
\norm{x^{[k]}}_{H^1}\to\infty.
$$
On the other hand, for all sufficiently large $k$,
$$
A_T(x^{[k]})\leq A_T(x).
$$
This contradicts the coercivity of $A_T$ on
$\lm^H$. Therefore, $V^H=\{0\}.$

$(i)\implies(iii)$:
If $\epsilon>2\pi$, then the statement is vacuously true. Now fix $\epsilon\in(0,2\pi]$ and let $(y^{[k]})_{k\in\mathbb N}$ be a sequence in
$$
\bigcup_{|\beta|\leq2\pi-\epsilon}(\lm^H)_\beta
$$
whose actions are uniformly bounded by a constant $M>0$. For each
$k$, choose $\beta_k$ with $|\beta_k|\leq2\pi-\epsilon$ and
$x^{[k]}\in\lm^H$ such that
$$
y^{[k]}(t)
=
\exp\left(i\frac{\beta_k}{T}t\right)x^{[k]}(t).
$$

Since $x^{[k]}\in\lm^H$, for every $h\in H$,
$$
\begin{aligned}
h\left(\int_0^T x^{[k]}(t)\,dt\right)
&=
h\left(\int_0^T h^{-1} x^{[k]}(\tau(h)t)\,dt\right)\\
&=
\int_0^T x^{[k]}(\tau(h)t)\,dt\\
&=
\int_0^T x^{[k]}(t)\,dt.
\end{aligned}
$$
Hence
$$
\int_0^T x^{[k]}(t)\,dt\in V^H.
$$
Since $V^H=\{0\}$,
$$
\int_0^T x^{[k]}(t)\,dt=0.
$$
Therefore, by Wirtinger's inequality (see, for example,
Lemma~3.1(a) of \cite{herzog2024minimal}),
\begin{equation}\label{eq:Wirtinger's inequality}
    \norm{x^{[k]}}_{L^2}
\leq
\frac{T}{2\pi}
\norm{\dot x^{[k]}}_{L^2}.
\end{equation}
On the other hand,
$$
\dot y^{[k]}(t)
=
\exp\left(i\frac{\beta_k}{T}t\right)
\left(
\dot x^{[k]}(t)
+
\frac{\beta_k}{T}ix^{[k]}(t)
\right).
$$
Therefore, by \eqref{eq:Wirtinger's inequality},
\begin{equation*}
\begin{split}
\norm{\dot y^{[k]}}_{L^2}
&=
\norm{\dot x^{[k]}+\frac{\beta_k}{T}ix^{[k]}}_{L^2}\\
&\geq
\norm{\dot x^{[k]}}_{L^2}
-
\frac{|\beta_k|}{T}\norm{x^{[k]}}_{L^2}\\
&\geq
\left(1-\frac{|\beta_k|}{2\pi}\right)
\norm{\dot x^{[k]}}_{L^2}\\
&\geq
\frac{\epsilon}{2\pi}\norm{\dot x^{[k]}}_{L^2}.
\end{split}
\end{equation*}
Combining the above estimate with \eqref{eq:Wirtinger's inequality}, we obtain
\begin{equation}\label{eq: estimate L2 norm using L2 of derivative}
\norm{y^{[k]}}_{L^2}
=
\norm{x^{[k]}}_{L^2}
\leq
\frac{T}{2\pi}\norm{\dot x^{[k]}}_{L^2}
\leq
\frac{T}{\epsilon}\norm{\dot y^{[k]}}_{L^2}.
\end{equation}
Finally, since $U\geq0$,
we have
\begin{equation}\label{eq: L2 derivative bounded by action}
    \norm{\dot y^{[k]}}_{L^2}^2
\leq
\frac{2A_T(y^{[k]} )}{m_{\min}}\leq \frac{2M }{m_{\min}}.
\end{equation}
Combining $(\ref{eq: estimate L2 norm using L2 of derivative})$ and $(\ref{eq: L2 derivative bounded by action})$, we have $H^1-$norms of $y^{[k]}$ are uniformly bounded. 
\end{proof}

\begin{prop}\label{prop: estimate of action after applying rotation}
Let $H$ be a finite group acting on $\lm$ through
$\rho,\tau,\sigma$, and suppose that
$$
V^H=\{0\}.
$$
Let $|\beta_1|<2\pi$ and let
$y\in(\lm^H)_{\beta_1}$ have finite action and  satisfy
$$
\int_0^T K(y(t),\dot y(t))\,dt
\leq K_1
$$
for some $K_1>0$. Then, for every $\beta_2\in \R$,
$$
\left|
A_T\left(
\exp\left(i\frac{\beta_2}{T}t\right)y(t)
\right)
-
A_T(y)
\right|
\leq
\delta(\beta_1,\beta_2,K_1),
$$
where
\begin{equation}\label{eq: explicit value of delta}
    \delta(\beta_1,\beta_2,K_1)
:=
\left(
\frac{2|\beta_2|}{2\pi-|\beta_1|}
+
\frac{\beta_2^2}{(2\pi-|\beta_1|)^2}
\right)K_1.
\end{equation}
\end{prop}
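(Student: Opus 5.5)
Since the potential is invariant under rotations, the potential parts of the two actions coincide pointwise in $t$. The difference of actions therefore reduces to a difference of kinetic integrals, and we will estimate it through the $L^2$-norm of $y$. Set $z(t)=\exp\left(i\frac{\beta_2}{T}t\right)y(t)$. Then
$$
\dot z(t)=\exp\left(i\tfrac{\beta_2}{T}t\right)\left(\dot y(t)+\tfrac{\beta_2}{T}iy(t)\right).
$$
Working in the mass-weighted inner product $\langle u,v\rangle_m=\sum_i m_i\,u_i\cdot v_i$, for which $K=\frac12\|\dot y\|_m^2$, we expand:
$$
A_T(z)-A_T(y)=\frac{\beta_2}{T}\int_0^T\langle \dot y,iy\rangle_m\,dt+\frac{\beta_2^2}{2T^2}\int_0^T\|y\|_m^2\,dt .
$$
The absolute value of the first term is at most $\frac{|\beta_2|}{T}\|\dot y\|_{L^2_m}\|y\|_{L^2_m}$ by Cauchy--Schwarz, since multiplication by $i$ is a $\langle\cdot,\cdot\rangle_m$-isometry. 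The second term is $\frac{\beta_2^2}{2T^2}\|y\|_{L^2_m}^2$.

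It remains to bound $\|y\|_{L^2_m}$ by $\|\dot y\|_{L^2_m}$, with constant $T/(2\pi-|\beta_1|)$. This follows the proof of $(i)\implies(iii)$ in Proposition~\ref{prop: uniform coercivity}, carried out in the mass-weighted norm. Write $y=\exp(i\frac{\beta_1}{T}t)x$ with $x\in\lm^H$. The same averaging argument shows that $\int_0^T x\,dt\in V^H=\{0\}$. Wirtinger's inequality holds componentwise for $T$-periodic mean-zero curves, so it remains valid in the mass-weighted norm: $\|x\|_{L^2_m}\le\frac{T}{2\pi}\|\dot x\|_{L^2_m}$. The triangle inequality then gives $\|\dot y\|_{L^2_m}\ge\left(1-\frac{|\beta_1|}{2\pi}\right)\|\dot x\|_{L^2_m}$. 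Since $\|y\|_{L^2_m}=\|x\|_{L^2_m}$, this yields
$$
\|y\|_{L^2_m}\le\frac{T}{2\pi-|\beta_1|}\|\dot y\|_{L^2_m}.
$$
Using the weighted norm rather than the Euclidean one is essential here: it avoids the loss of a factor $m_{\max}/m_{\min}$ and gives exactly the constant in \eqref{eq: explicit value of delta}.

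Substituting this bound, and using $\|\dot y\|_{L^2_m}^2=2\int_0^T K(y,\dot y)\,dt\le 2K_1$, we get
$$
|A_T(z)-A_T(y)|\le\frac{|\beta_2|}{2\pi-|\beta_1|}\,2K_1+\frac{\beta_2^2}{2(2\pi-|\beta_1|)^2}\,2K_1 .
$$
This equals $\delta(\beta_1,\beta_2,K_1)$ as defined in \eqref{eq: explicit value of delta}.

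The main care point is the rotation step. The rotation changes neither $U$ nor the $L^2$ norm, and the finite-action hypothesis guarantees that both actions are finite, so that subtracting them is legitimate.
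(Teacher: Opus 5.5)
Your proof is correct and follows essentially the same route as the paper. Rotational invariance reduces the difference to the kinetic cross term and the quadratic term, Cauchy--Schwarz handles the cross term, and the mean-zero/Wirtinger argument in the mass-weighted norm gives $\|y\|_{L^2_m}\le \frac{T}{2\pi-|\beta_1|}\|\dot y\|_{L^2_m}$. The one difference is cosmetic: you get that bound from the lower estimate $\|\dot y\|_{L^2_m}\ge (1-\frac{|\beta_1|}{2\pi})\|\dot x\|_{L^2_m}$ as in Proposition~\ref{prop: uniform coercivity}, whereas the paper bounds $\|\dot x\|_{L^2_m}\le \|\dot y\|_{L^2_m}+\frac{|\beta_1|}{T}\|y\|_{L^2_m}$ and rearranges, and both give the same constant.
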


\begin{proof}
For each $1\leq i\leq n$,
$$\frac{d}{dt}\left(\exp\left(i\frac{\beta_2}{T}t\right)y_i(t)\right)
=
\exp\left(i\frac{\beta_2}{T}t\right)\left(\dot y_i(t)+i\frac{\beta_2}{T}y_i(t)\right).$$
Since the potential is rotationally invariant,
\begin{equation}\label{eq:estimate of action difference after rotation}
\begin{split}
\left|
A_T\left(\exp\left(i\frac{\beta_2}{T}t\right)y(t)\right)-A_T(y)
\right|
&=
\frac12\left|
\int_0^T\sum_{i=1}^n m_i\left(\left|\dot y_i(t)+i\frac{\beta_2}{T}y_i(t)\right|^2-|\dot y_i(t)|^2\right)dt
\right| \\
&\leq
\frac{|\beta_2|}{T}\int_0^T\sum_{i=1}^n m_i|\dot y_i(t)||y_i(t)|\,dt\\
&\phantom{\leq}\;+
\frac{\beta_2^2}{2T^2}\int_0^T\sum_{i=1}^n m_i|y_i(t)|^2\,dt.
\end{split}
\raisetag{-5\baselineskip}
\end{equation}
We now estimate the two terms separately. Since $y\in(\lm^H)_{\beta_1}$, we may write
$$y(t)=\exp\left(i\frac{\beta_1}{T}t\right)x(t),\qquad x\in\lm^H.$$
Since $V^H=\{0\}$, the argument in the proof of
Proposition~\ref{prop: uniform coercivity} gives
$$\int_0^T x(t)\,dt=0.$$
Therefore, by Wirtinger's inequality,
$$\left(\int_0^T\sum_{i=1}^n m_i|x_i(t)|^2\,dt\right)^{1/2}
\leq
\frac{T}{2\pi}
\left(\int_0^T\sum_{i=1}^n m_i|\dot x_i(t)|^2\,dt\right)^{1/2}.$$
Moreover,
$$\dot x(t)=\exp\left(-i\frac{\beta_1}{T}t\right)\left(\dot y(t)-i\frac{\beta_1}{T}y(t)\right).$$
Combining these with the triangle inequality, we obtain
\begin{equation*}
\begin{split}
\left(\int_0^T\sum_{i=1}^n m_i|y_i(t)|^2\,dt\right)^{1/2}
&=\left(\int_0^T\sum_{i=1}^n m_i|x_i(t)|^2\,dt\right)^{1/2}\\
&\leq\frac{T}{2\pi}\left(\int_0^T\sum_{i=1}^n m_i|\dot x_i(t)|^2\,dt\right)^{1/2}\\
&\leq\frac{T}{2\pi}\left(\int_0^T\sum_{i=1}^n m_i|\dot y_i(t)|^2\,dt\right)^{1/2}\\
&\phantom{\leq}\;+\frac{T}{2\pi}\frac{|\beta_1|}{T}\left(\int_0^T\sum_{i=1}^n m_i|y_i(t)|^2\,dt\right)^{1/2}.
\end{split}
\end{equation*}
Since $|\beta_1|<2\pi$, we may rearrange the preceding inequality to obtain
\begin{equation}\label{eq: L2 bound using twisted kinetic energy}
\int_0^T\sum_{i=1}^n m_i|y_i(t)|^2\,dt
\leq
\frac{T^2}{(2\pi-|\beta_1|)^2}
\int_0^T\sum_{i=1}^n m_i|\dot y_i(t)|^2\,dt
\leq
\frac{2K_1T^2}{(2\pi-|\beta_1|)^2}.
\end{equation}
The
Cauchy--Schwarz inequality and
\eqref{eq: L2 bound using twisted kinetic energy} give
\begin{equation*}
\begin{split}
\int_0^T\sum_{i=1}^n m_i|\dot y_i(t)||y_i(t)|\,dt
&\leq
\left(\int_0^T\sum_{i=1}^n m_i|\dot y_i(t)|^2\,dt\right)^{1/2}
\left(\int_0^T\sum_{i=1}^n m_i|y_i(t)|^2\,dt\right)^{1/2}\\
&\leq
\frac{2TK_1}{2\pi-|\beta_1|}.
\end{split}
\end{equation*}
Substituting the above two estimates into \eqref{eq:estimate of action difference after rotation} gives
\begin{equation}
\begin{split}
\left|
A_T\left(\exp\left(i\frac{\beta_2}{T}t\right)y(t)\right)-A_T(y)
\right|
&\leq
\frac{|\beta_2|}{T}\frac{2TK_1}{2\pi-|\beta_1|}
+
\frac{\beta_2^2}{2T^2}\frac{2K_1T^2}{(2\pi-|\beta_1|)^2}\\
&=
\left(
\frac{2|\beta_2|}{2\pi-|\beta_1|}
+
\frac{\beta_2^2}{(2\pi-|\beta_1|)^2}
\right)K_1.
\end{split}
\end{equation}
Hence we may take
$$\delta(\beta_1,\beta_2,K_1)
:=
\left(
\frac{2|\beta_2|}{2\pi-|\beta_1|}
+
\frac{\beta_2^2}{(2\pi-|\beta_1|)^2}
\right)K_1,$$
which proves the proposition.
\end{proof}

\begin{lemma}\label{lemma: admissible critical point}
Let $H$ be a finite group acting on $\lm$, and let $D\subset\lm^H$ be an $H$-admissible subset. Suppose that $x\in D$ is collision-free, and let $F$ be a $C^1$ functional in a neighborhood of $x$ in $\lm$. If $x$ is a critical point of $F$ restricted to $D$, then $x$ is also a critical point of $F$ restricted to $\lm^H$.
\end{lemma}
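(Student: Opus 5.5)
The argument is local and rests almost entirely on the second condition in the definition of $H$-admissibility. Since $x\in D$ is collision-free, that condition gives a radius $r>0$ with
$$
B_r(x)\cap\lm^H\subset D,
$$
where $B_r(x)$ is the strong $H^1$-ball around $x$. So near $x$ the constraint set $D$ coincides with $\lm^H$. Criticality is a local, first-order notion, so criticality on $D$ should transfer directly to criticality on $\lm^H$.

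First I will make precise what ``critical point of $F$ restricted to $D$'' means. Since $D$ need not be a manifold, the natural interpretation is that the one-sided derivatives along admissible directions vanish: for every $v$ with $x+sv\in D$ for all small $s$,
$$
dF(x)[v]=0.
$$
Next I note that $\lm^H$ is a closed linear subspace of $\lm$. It is the common fixed set of the bounded linear operators $x\mapsto h\cdot x$, $h\in H$; these are linear because $\rho(h)$ and the permutation are linear and $\tau(h)$ acts by an isometry of the time circle. So for any $v\in\lm^H$, the line $x+sv$ stays in $\lm^H$. For $|s|<r/\norm{v}_{H^1}$ it also lies in $B_r(x)$, and hence in $D$.

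Therefore every $v\in\lm^H$ is an admissible two-sided direction for $D$ at $x$. The assumption then gives $dF(x)[v]=0$ for all $v\in\lm^H$. This says exactly that $d(F|_{\lm^H})(x)=0$, i.e.\ $x$ is a critical point of $F$ restricted to $\lm^H$. Shrinking $r$ if necessary so that the segment stays inside the neighborhood where $F$ is $C^1$, differentiability along these segments follows from the $C^1$ hypothesis.

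The only real obstacle is interpretive: fixing a definition of ``critical point restricted to $D$'' that makes sense for a set which is merely weakly closed. Any reasonable definition (vanishing directional derivatives along admissible curves, or along segments in $D$) is local. Because $D$ and $\lm^H$ agree on a strong $H^1$-neighborhood of $x$, the two restricted notions coincide there. Beyond this, the only thing to check is the linearity of the $H$-action on $\lm$, which is immediate from the definition.
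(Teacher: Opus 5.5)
Your proposal is correct and is essentially the paper's argument: admissibility makes the collision-free point $x$ a strong-$H^1$ interior point of $D$ relative to $\Lambda^H$, so $D$ and $\Lambda^H$ coincide near $x$ and criticality transfers. Your extra observation that $\Lambda^H$ is a linear subspace just makes explicit a step the paper leaves implicit: every segment $x+sv$ with $v\in\Lambda^H$ and $|s|$ small then lies in $D$, so each such $v$ is a two-sided admissible direction.
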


\begin{proof}
Since $D$ is $H$-admissible and $x$ is collision-free, $x$ is an interior point of $D$ in the strong $H^1$ topology relative to $\lm^H$. Hence, in a neighborhood of $x$ in $\lm^H$, the sets $D$ and $\lm^H$ coincide. Therefore, criticality of $x$ for $F$ restricted to $D$ implies criticality of $x$ for $F$ restricted to $\lm^H$.
\end{proof}

\begin{prop}\label{prop: collision-free minimizer is solution}
Let $H$ be a finite group acting on $\lm$, and let $D\subset\lm^H$
be an $H$-admissible subset. Let $\beta\neq0$, and let
$$
y_\beta(t)=\exp\left(i\frac{\beta}{T}t\right)x_\beta(t),\qquad x_\beta\in D,
$$
be a collision-free critical point of $A_T$ restricted to $D_\beta$. Then $y_\beta$ satisfies the $n$-body equations on $[0,T]$ if and only if either $H$ is of type~R or $x_\beta$ is constant. In either case, its extension
$$
y_\beta(t+kT)=\exp(ik\beta)y_\beta(t),\qquad t\in[0,T],\ k\in\mathbb Z,
$$
is a solution of the $n$-body equations on $\mathbb R$.
\end{prop}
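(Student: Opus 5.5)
The plan is to pass to the rotating frame, use $H$-admissibility to reduce to criticality on the full fixed-point space $\lm^H$, and then decide via a Palais-type argument whether criticality on $\lm^H$ upgrades to criticality on all of $\lm$. Set $\omega:=\beta/T$. Since $y_\beta$ is critical for $A_T$ on $D_\beta$ exactly when $x_\beta$ is critical for $A_\beta$ on $D$, and $x_\beta$ is collision-free, Lemma~\ref{lemma: admissible critical point} shows that $x_\beta$ is a critical point of $A_\beta$ restricted to $\lm^H$. Here $A_\beta$ is $C^1$ near $x_\beta$ because $U$ is $C^2$ on $V^*$.

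\textbf{Step 1: split $A_\beta$ by parity under $H$.} Writing $J$ for multiplication by $i$, I would expand
$$A_\beta(x)=A_T(x)+\omega\,C(x)+\frac{\omega^2}{2}Q(x),\qquad C(x)=\int_0^T\sum_i m_i\langle \dot x_i,Jx_i\rangle\,dt,\quad Q(x)=\int_0^T\sum_i m_i|x_i|^2\,dt.$$
$A_T$ and $Q$ are invariant under the $H$-action on $\lm$ (the mass condition on $\sigma$ ensures the permutation preserves the weights). For $C$, note that $\rho(h)J=\det(\rho(h))\,J\rho(h)$. Moreover, $t\mapsto\tau(h^{-1})t$ reverses the orientation of the time circle exactly when $\det\tau(h)=-1$, which flips the sign of $\dot x$ and leaves the integral otherwise unchanged. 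Together these give
$$C(h\cdot x)=\det(\rho(h))\det(\tau(h))\,C(x).$$
So $C$ transforms by the character $\chi(h)=\det\rho(h)\det\tau(h)$, and $\chi$ is trivial exactly when $H$ is of type~R.

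\textbf{Step 2: type~R gives a solution.} If $H$ is of type~R, then $A_\beta$ is $H$-invariant and the action of $H$ on $\lm$ is by isometries. By the principle of symmetric criticality, which amounts to the gradient of an invariant functional at a fixed point lying in $\lm^H$, $x_\beta$ is critical for $A_\beta$ on all of $\lm$. Undoing the rotation, $y_\beta$ is then a critical point of $A_T$ on $\lm_\beta$. Interior criticality gives the Euler--Lagrange equations \eqref{equation of motion} on $(0,T)$. The natural boundary condition for free $\beta$-loops gives $\dot y_\beta(T)=e^{i\beta}\dot y_\beta(0)$. Since the equations are autonomous and rotation-equivariant, the extension $y_\beta(t+kT)=e^{ik\beta}y_\beta(t)$ matches in $C^1$ at each $kT$, and is therefore a solution on $\mathbb R$. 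Alternatively one can use the shifted-interval argument from the proof of Theorem~\ref{strong force}.

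\textbf{Step 3: the converse (the main obstacle).} Suppose some $h$ has $\chi(h)=-1$. The difficulty is to handle the part of the gradient that points out of $\lm^H$. At a fixed point $x\in\lm^H$, the gradients of $A_T$ and $Q$ are $H$-fixed, while the gradient of $C$ satisfies $h\cdot\nabla C(x)=\chi(h)\nabla C(x)$. Averaging over $H$ then shows that $\nabla C(x)$ is $L^2$/$H^1$-orthogonal to $\lm^H$. Hence criticality on $\lm^H$ is equivalent to
$$\nabla A_T(x_\beta)+\frac{\omega^2}{2}\nabla Q(x_\beta)=0.$$
Consequently $x_\beta$ is critical on all of $\lm$ if and only if $\omega\,\nabla C(x_\beta)=0$. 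A direct computation gives
$$dC(x)[v]=2\int_0^T\sum_i m_i\langle \dot x_i,Jv_i\rangle\,dt,$$
whose vanishing for all $v\in\lm$ forces $\dot x_\beta\equiv0$, using $\beta\neq0$. Conversely, if $x_\beta$ is constant, then $dC(x_\beta)=0$ and the displayed equation already yields full criticality. In that case $y_\beta$ is a relative equilibrium, and Step~2's boundary argument applies verbatim. The delicate points I expect are justifying the orthogonality of $\nabla C$ to $\lm^H$ (equivariance of the Riesz map under the isometric $H$-action), and checking the sign bookkeeping for $\tau$ when $\tau(h)$ is a reflection of the time circle.
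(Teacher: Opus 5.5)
Your proof is correct, but it is organized differently from the paper's. The paper never splits $A_\beta$. It proves the single law $A_\beta(h\cdot x)=A_{\chi_h\beta}(x)$ and then argues case by case:
\begin{enumerate}
\item \emph{Type~R.} Palais's principle, exactly as in your Step~2.
\item \emph{Constant $x_\beta\equiv a$.} A separate finite-dimensional computation. The paper tests criticality against constant variations in $V^H$ and shows that the vector $\bigl(\frac{1}{m_i}\nabla_iU(a)+\frac{\beta^2}{T^2}a_i\bigr)_i$ itself lies in $V^H$. Hence it vanishes, and $y_\beta$ is an explicit relative equilibrium.
\item \emph{Converse.} The paper differentiates $A_\beta(h\cdot x)=A_{-\beta}(x)$ at $x_\beta$ along variations supported in $(0,T)\setminus\{t_h\}$. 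It deduces that $x_\beta$ also satisfies the Euler--Lagrange equation of $A_{-\beta}$, and subtracts the two equations to kill the Coriolis term.
\end{enumerate}
Your splitting carries the same information, since $\frac12(A_\beta+A_{-\beta})=A_T+\frac{\omega^2}{2}Q$ and $\frac12(A_\beta-A_{-\beta})=\omega C$. What it buys is a single non-type-R analysis. Applying Palais to the invariant part shows that $x_\beta$ is always a critical point of $A_T+\frac{\omega^2}{2}Q$ on all of $\lm$ (centrifugal term, no Coriolis term). That one fact disposes of the constant case and the converse together, and it avoids the support bookkeeping around $t_h$. The paper's route, in exchange, writes the relative-equilibrium equation down explicitly. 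Your orthogonality step is also easier than you expect: for $x\in\lm^H$ and $h$ with $\chi(h)=-1$, $C(x)=C(h\cdot x)=-C(x)$, so $C\equiv0$ on $\lm^H$.

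One point to make explicit. The converse hypothesis is that $y_\beta$ satisfies the equations, i.e.\ $dA_\beta(x_\beta)[v]=0$ for $v$ compactly supported in $(0,T)$. It is not criticality on all of $\lm$, which is what your Step~3 starts from. Your framework covers this directly. Since $d(A_T+\frac{\omega^2}{2}Q)(x_\beta)=0$, you get $dC(x_\beta)[v]=0$ for such $v$. The formula $dC(x)[v]=2\int_0^T\sum_i m_i\langle\dot x_i,Jv_i\rangle\,dt$, with $v$ ranging only over compactly supported variations, still forces $\dot x_\beta\equiv0$. Alternatively, the invariant part already supplies $\dot x_\beta(T)=\dot x_\beta(0)$, so the interior equations upgrade to full criticality.
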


\begin{proof}
Recall that
$$A_\beta(x)=A_T\left(\exp\left(i\frac{\beta}{T}t\right)x(t)\right),\qquad x\in\lm.$$
Thus $x_\beta$ is a critical point of $A_\beta$ restricted to $D$. By Lemma~\ref{lemma: admissible critical point}, $x_\beta$ is also a critical point of $A_\beta$ restricted to $\lm^H$.

Let
$$J=\begin{pmatrix}0&-1\\1&0\end{pmatrix}.$$
Then, for $x\in\lm$,
$$A_\beta(x)=\int_0^T\left[\frac12\sum_{i=1}^n m_i\left|\dot x_i+\frac{\beta}{T}Jx_i\right|^2+U(x)\right]dt.$$

For $h\in H$, set
$$\epsilon_h:=\det(\tau(h)),\qquad \delta_h:=\det(\rho(h)),\qquad \chi_h:=\epsilon_h\delta_h.$$
Since
$$J\rho(h)=\delta_h\rho(h)J,$$
for an arbitrary $x\in\lm$, if $z=h\cdot x$, $s=\tau(h^{-1})t$, and $j=\sigma(h^{-1})(i)$, then
$$\dot z_i(t)+\frac{\beta}{T}Jz_i(t)=\epsilon_h\rho(h)\left(\dot x_j(s)+\chi_h\frac{\beta}{T}Jx_j(s)\right).$$
Hence, by orthogonality, preservation of the masses, and invariance of the potential and the time-circle measure,
\begin{equation}\label{eq:A_beta transformation}
A_\beta(h\cdot x)=A_{\chi_h\beta}(x).
\end{equation}

Suppose first that $H$ is of type~R. Then $\chi_h=1$ for every $h\in H$, and \eqref{eq:A_beta transformation} shows that $A_\beta$ is $H$-invariant. Since $x_\beta$ is a critical point of $A_\beta$ restricted to $\lm^H$, Palais's principle of symmetric criticality \cite{palais1979principle} implies that $x_\beta$ is a critical point of $A_\beta$ on $\lm$. Equivalently, $y_\beta$ is a critical point of $A_T$ on $\lm_\beta$.

Since $y_\beta$ is collision-free, it satisfies the $n$-body equations on $(0,T)$. Moreover, the first variation with respect to twisted variations satisfying $\eta(T)=\exp(i\beta)\eta(0)$ gives
$$
\dot y_\beta(T)=\exp(i\beta)\dot y_\beta(0).
$$
Hence
$$
y_\beta(t+kT)=\exp(ik\beta)y_\beta(t)
$$
defines a solution on $\mathbb R$.

Now suppose that $x_\beta$ is constant, say $x_\beta(t)\equiv a$. Since $x_\beta\in D$, we have $a\in V^H$, and $a$ is collision-free. Since $x_\beta$ is a critical point of $A_\beta$ restricted to $\lm^H$, for every constant variation $v=(v_1,\ldots,v_n)\in V^H$,
\begin{equation}\label{eq:variation on constant x}
\sum_{i=1}^n\left\langle\nabla_iU(a)+m_i\frac{\beta^2}{T^2}a_i,v_i\right\rangle=0.
\end{equation}
Since $a\in V^H$, a direct computation using orthogonality of $\rho$ and preservation of the masses by $\sigma$ gives, for every $h\in H$,
$$\frac{1}{m_i}\nabla_iU(a)=\rho(h)\frac{1}{m_{\sigma(h^{-1})(i)}}\nabla_{\sigma(h^{-1})(i)}U(a),$$
and thus
$$\left(\frac{1}{m_i}\nabla_iU(a)+\frac{\beta^2}{T^2}a_i\right)_{i=1}^n$$
is fixed by the $H$-action. Moreover, from the form of the potential,
$$\sum_{i=1}^n\nabla_iU(z)=0$$
for every collision-free configuration $z\in V$. Since $a\in V^H\subset V$, it follows that
$$\sum_{i=1}^n m_i\left(\frac{1}{m_i}\nabla_iU(a)+\frac{\beta^2}{T^2}a_i\right)=0.$$
Therefore,
$$\left(\frac{1}{m_i}\nabla_iU(a)+\frac{\beta^2}{T^2}a_i\right)_{i=1}^n\in V^H.$$
Taking
$$v_i=\frac{1}{m_i}\nabla_iU(a)+\frac{\beta^2}{T^2}a_i$$
in \eqref{eq:variation on constant x}, we obtain
$$\nabla_iU(a)+m_i\frac{\beta^2}{T^2}a_i=0,\qquad i=1,\ldots,n.$$
Thus
$$y_\beta(t)=\exp\left(i\frac{\beta}{T}t\right)a$$
is a relative equilibrium. In particular, it satisfies the $n$-body equations on $[0,T]$, and its extension is a solution on $\mathbb R$.

It remains to prove the converse. Suppose that $y_\beta$ satisfies the $n$-body equations on $[0,T]$ and that $H$ is not of type~R. Then $x_\beta$ satisfies the Euler--Lagrange equation for $A_\beta$ on $(0,T)$. In particular,
$$dA_\beta(x_\beta)[\eta]=0$$
for every variation $\eta$ compactly supported in $(0,T)$.

Since $H$ is not of type~R, there exists $h\in H$ such that $\chi_h=-1$. By \eqref{eq:A_beta transformation},
$$A_\beta(h\cdot x)=A_{-\beta}(x).$$
Let $t_h\in[0,T)$ represent $\tau(h^{-1})(0)$ on the time circle. For every variation $\eta$ compactly supported in $(0,T)\setminus\{t_h\}$, the variation $h\cdot\eta$ is compactly supported in $(0,T)$. Since $h\cdot x_\beta=x_\beta$ and $x_\beta$ satisfies the Euler--Lagrange equation for $A_\beta$ on $(0,T)$, differentiating the above identity at $x_\beta$ gives
$$dA_{-\beta}(x_\beta)[\eta]=dA_\beta(x_\beta)[h\cdot\eta]=0.$$
Hence $x_\beta$ also satisfies the Euler--Lagrange equation for $A_{-\beta}$ on $(0,T)\setminus\{t_h\}$.

On this set, the Euler--Lagrange equations for $A_\beta$ and $A_{-\beta}$ are
$$m_i\left(\ddot x_{\beta,i}+2\frac{\beta}{T}J\dot x_{\beta,i}-\frac{\beta^2}{T^2}x_{\beta,i}\right)=\nabla_iU(x_\beta)$$
and
$$m_i\left(\ddot x_{\beta,i}-2\frac{\beta}{T}J\dot x_{\beta,i}-\frac{\beta^2}{T^2}x_{\beta,i}\right)=\nabla_iU(x_\beta).$$
Subtracting the two equations gives
$$4m_i\frac{\beta}{T}J\dot x_{\beta,i}=0.$$
Since $\beta\neq0$, it follows that
$$\dot x_{\beta,i}=0\quad\text{almost everywhere on }(0,T),\qquad i=1,\ldots,n.$$
Thus $x_\beta$ is constant on $[0,T]$.

This proves the equivalence.
\end{proof}

\begin{remark}
If $V^H=\{0\}$, the possibility that $x_\beta$ is constant is excluded by the collision-free assumption. Hence, in this case, $y_\beta$ satisfies the $n$-body equations on $[0,T]$, and equivalently its extension is a solution on $\mathbb R$, if and only if $H$ is of type~R.
\end{remark}

We now combine the preceding propositions to prove the main theorems.

\begin{proof}[Proof of Theorem~\ref{theorem: weak force, glocal minimizer version}]
By Proposition~\ref{prop: uniform coercivity} and the inclusion $D_\beta\subset(\lm^H)_\beta$, any action-minimizing sequence in $D_\beta$ has bounded $H^1$-norm. Since $H^1([0,T],V)$ embeds continuously into $C^0([0,T],V)$, the sequence is also uniformly bounded in the $L^\infty$-norm. Since $D$ is $H$-admissible, it is weakly closed in $\lm^H$. Therefore, by Lemma~\ref{lemma:direct method compactness}, the standard compactness argument shows that $A_T$ attains its minimum on $D_\beta$ for every $|\beta|<2\pi$.

Let $y_\beta$ be an arbitrary minimizer of $A_T$ over $D_\beta$, and let $x_0\in\mathcal M_0$. Proposition~\ref{prop: estimate of action after applying rotation} gives
\begin{equation}\label{eq:upper estimate on action of y_beta}
A_T(y_\beta)\leq A_T\left(\exp\left(i\frac{\beta}{T}t\right)x_0\right)\leq A_T(x_0)+\delta\left(0,\beta,A_T(x_0)\right).
\end{equation}
It follows that $A_T(y_\beta)$, and hence the kinetic part of the action of $y_\beta$, is uniformly bounded by some $K_1>0$ for all $|\beta|<2\pi$.

Since
$$
\exp\left(-i\frac{\beta}{T}t\right)y_\beta\in D,
$$
the minimizing property of $x_0$ gives
\begin{equation}\label{eq:lower estimate on action of y_beta}
A_T(x_0)\leq A_T\left(\exp\left(-i\frac{\beta}{T}t\right)y_\beta\right)\leq A_T(y_\beta)+\delta(\beta,-\beta,K_1).
\end{equation}
Combining \eqref{eq:upper estimate on action of y_beta}, \eqref{eq:lower estimate on action of y_beta}, and the explicit formula for $\delta$ in \eqref{eq: explicit value of delta}, we obtain
$$
A_T(y_\beta)\longrightarrow A_T(x_0)\qquad\text{as }\beta\longrightarrow0.
$$

Let
$$
m_{\mathrm{col}}:=\inf\left\{A_T(c):c\in D\text{ is colliding}\right\},
$$
with the convention that the infimum of the empty set is $+\infty$. By the same compactness argument applied to an action-minimizing sequence of colliding curves, together with the assumption that every element of $\mathcal M_0$ is collision-free, there exists $\epsilon>0$ such that
$$
A_T(x_0)+\epsilon\leq m_{\mathrm{col}}.
$$
The preceding estimates are uniform in the choice of the minimizer $y_\beta$. Thus we may choose $\theta>0$ sufficiently small that, for every $|\beta|<\theta$,
$$
\left|A_T(y_\beta)-A_T(x_0)\right|+\delta(\beta,-\beta,K_1)<\epsilon.
$$
Then
\begin{equation}\label{ineq: estimate on rotated back y_beta}
\begin{split}
A_T\left(\exp\left(-i\frac{\beta}{T}t\right)y_\beta\right)
&\leq A_T(y_\beta)+\delta(\beta,-\beta,K_1)\\
&\leq A_T(x_0)+\left|A_T(y_\beta)-A_T(x_0)\right|+\delta(\beta,-\beta,K_1)\\
&<A_T(x_0)+\epsilon\\
&\leq m_{\mathrm{col}}.
\end{split}
\end{equation}
Hence
$$
\exp\left(-i\frac{\beta}{T}t\right)y_\beta
$$
cannot have a collision. Since multiplication by $\exp\left(i\beta t/T\right)$ preserves collisions, $y_\beta$ is collision-free for every $|\beta|<\theta$.

Moreover, given any sequence $\beta_k\rightarrow0$ and $y_{\beta_k}\in\mathcal M_{\beta_k}$, by Lemma~\ref{lemma:direct method compactness}, after passing to a subsequence, still denoted by $\beta_k$, we may assume that $y_{\beta_k}$ converges uniformly to some $y^*$ and
$$
A_T(y^*)\leq\liminf_{k\to\infty}A_T(y_{\beta_k})=A_T(x_0).
$$
Since $\exp\left(-i\frac{\beta_k}{T}t\right)y_{\beta_k}\in D$ has uniformly bounded $H^1$-norm and $D$ is weakly closed in $\lm^H$, after passing to a subsequence, it converges weakly to some $z\in D$. On the other hand, the rotated-back sequence converges uniformly to $y^*$, which forces $z=y^*\in D$. Hence $y^*\in\mathcal M_0$. Therefore,
$$
A_T(y_{\beta_k})\longrightarrow A_T(y^*).
$$
Applying the second statement of Lemma~\ref{lemma:direct method compactness}, we obtain
$$
y_{\beta_k}\longrightarrow y^*\qquad\text{strongly in }H^1.
$$
Consequently,
$$
\sup_{y\in\mathcal M_\beta}\operatorname{dist}_{H^1}(y,\mathcal M_0)\longrightarrow0\qquad\text{as }\beta\longrightarrow0.
$$
Lastly, by Proposition~\ref{prop: collision-free minimizer is solution} for $\beta\neq0$, and by Lemma~\ref{lemma: admissible critical point} together with Palais's principle for $\beta=0$, every $y_\beta$ with $|\beta|<\theta$ extends, via
$$
y_\beta(t+kT)=\exp(ik\beta)y_\beta(t),
$$
to a solution of the $n$-body equations on $\mathbb R$.

\end{proof}

In many applications, collisions in $D$ are excluded by obtaining
an explicit lower bound $\mathcal A_*$ for the action of colliding paths and comparing
it with the action of an explicitly constructed test path
$x_{\mathrm{test}}\in D$. In such cases, one can also obtain an
explicit admissible value of $\theta$ in
Theorem~\ref{theorem: weak force, glocal minimizer version}.

\begin{corollary}
\label{corollary: explicit value of theta in glocal theorem}
Assume the hypotheses of
Theorem~\ref{theorem: weak force, glocal minimizer version}.
Suppose additionally that there exist a constant $\mathcal A_*$ and a
test path $x_{\mathrm{test}}\in D$ such that
$$\mathcal A_{\mathrm{test}}:=A_T(x_{\mathrm{test}})<\mathcal A_*,$$
and such that, for every $y_\beta\in\mathcal M_\beta$ with
$|\beta|< 2\pi$, if $y_\beta$ has a collision, then
$$A_T\left(\exp\left(-i\frac{\beta}{T}t\right)y_\beta\right)\geq\mathcal A_*.$$
Then one may choose
\begin{equation}\label{eq:explcit value of theta}
\theta
=
2\pi\frac{\sqrt{\mathcal A_*}-\sqrt{\mathcal A_{\mathrm{test}}}}
{\sqrt{\mathcal A_*}+\sqrt{\mathcal A_{\mathrm{test}}}}
.
\end{equation}
\end{corollary}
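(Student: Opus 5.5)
The plan is to redo the collision-exclusion step of the proof of Theorem~\ref{theorem: weak force, glocal minimizer version} with explicit constants. In that proof, $A_T(x_0)$ is replaced by the test-path action $\mathcal A_{\mathrm{test}}$ and $m_{\mathrm{col}}$ by $\mathcal A_*$. The rest of the theorem needs no change: existence of minimizers on $D_\beta$ for $|\beta|<2\pi$, the $H^1$-convergence statement, and the extension to solutions via Proposition~\ref{prop: collision-free minimizer is solution} did not depend on the size of $\theta$ beyond collision-freeness. So only collision-freeness for $|\beta|<\theta$ has to be checked, with $\theta$ given by \eqref{eq:explcit value of theta}.

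Write $b:=|\beta|/(2\pi)<1$ and observe that the function $\delta$ of \eqref{eq: explicit value of delta} factors neatly.
\begin{itemize}
\item \emph{Upper bound.} With $\beta_1=0$ we have $\delta(0,\beta,K)=\bigl((1+b)^2-1\bigr)K$. Compare a minimizer $y_\beta\in\mathcal M_\beta$ with the rotated test path $\exp(i\beta t/T)x_{\mathrm{test}}\in D_\beta$. Its kinetic part is at most $\mathcal A_{\mathrm{test}}$ because $U\ge0$. Proposition~\ref{prop: estimate of action after applying rotation} then gives
$$A_T(y_\beta)\le \mathcal A_{\mathrm{test}}+\delta(0,\beta,\mathcal A_{\mathrm{test}})=(1+b)^2\mathcal A_{\mathrm{test}}.$$
\item \emph{Rotating back.} Apply the same proposition with $\beta_1=\beta$, $\beta_2=-\beta$ and $K_1=A_T(y_\beta)$, which bounds the kinetic part of $y_\beta$. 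Since
$$\frac{2|\beta|}{2\pi-|\beta|}+\frac{\beta^2}{(2\pi-|\beta|)^2}=\frac{1}{(1-b)^2}-1,$$
we get
$$A_T\Bigl(\exp\bigl(-i\tfrac{\beta}{T}t\bigr)y_\beta\Bigr)\le \frac{A_T(y_\beta)}{(1-b)^2}\le\Bigl(\frac{1+b}{1-b}\Bigr)^2\mathcal A_{\mathrm{test}}.$$
\end{itemize}

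Now set $s:=\sqrt{\mathcal A_*/\mathcal A_{\mathrm{test}}}>1$. The right-hand side above is strictly less than $\mathcal A_*$ exactly when $(1+b)/(1-b)<s$, that is, when $b<(s-1)/(s+1)$. This is equivalent to $|\beta|<\theta$ with $\theta$ as in \eqref{eq:explcit value of theta}, and note $\theta<2\pi$. For such $\beta$, suppose some $y_\beta\in\mathcal M_\beta$ had a collision. The hypothesis of the corollary would then force the rotated-back action to be at least $\mathcal A_*$, a contradiction. Hence every minimizer is collision-free. If $\mathcal A_{\mathrm{test}}=0$, the bound is trivial: every minimizer has zero action, so it cannot collide, and $\theta=2\pi$.

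The main point needing care is the rotating-back step. One must apply Proposition~\ref{prop: estimate of action after applying rotation} to $y_\beta\in(\lm^H)_\beta$ with $\beta_1=\beta$, not $\beta_1=0$. This requires $|\beta|<2\pi$ and $V^H=\{0\}$, both of which hold. One must also bound the kinetic part by the full action $A_T(y_\beta)$ so that the two $\delta$-factors combine multiplicatively into $\bigl((1+b)/(1-b)\bigr)^2$.
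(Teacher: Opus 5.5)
Your proposal is correct and follows the paper's proof: the same test-path bound $A_T(y_\beta)\le\mathcal A_{\mathrm{test}}(1+|\beta|/(2\pi))^2$, then the same rotate-back estimate with $\beta_1=\beta$, $\beta_2=-\beta$ producing the factor $(2\pi/(2\pi-|\beta|))^2$, and the same final inequality solved for $|\beta|$. The only cosmetic difference is that you take $K_1=A_T(y_\beta)$ in the rotate-back step where the paper takes $K_1=B_\beta$; both give the identical bound.
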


\begin{proof}
For $|\beta|<2\pi$ and $y_\beta\in\mathcal M_\beta$, repeating the computation in
\eqref{eq:upper estimate on action of y_beta} and using
Proposition~\ref{prop: estimate of action after applying rotation}, we obtain
\begin{equation}\label{eq:upper estimate on action of y_beta, corollary}
\begin{split}
A_T(y_\beta)
&\leq
A_T\left(\exp\left(i\frac{\beta}{T}t\right)x_{\mathrm{test}}\right)\\
&\leq
\mathcal A_{\mathrm{test}}
+
\delta\left(0,\beta,\mathcal A_{\mathrm{test}}\right).
\end{split}
\end{equation}
For simplicity, set
$$B_\beta:=\mathcal A_{\mathrm{test}}+\delta\left(0,\beta,\mathcal A_{\mathrm{test}}\right).$$
Then
$$A_T(y_\beta)\leq B_\beta.$$
Using the explicit expression for $\delta$ in \eqref{eq: explicit value of delta}, we have
\begin{equation}\label{eq: explicit B beta}
\begin{split}
B_\beta
&=
\mathcal A_{\mathrm{test}}
\left(
1+\frac{|\beta|}{\pi}+\frac{\beta^2}{4\pi^2}
\right)\\
&=
\mathcal A_{\mathrm{test}}
\left(
1+\frac{|\beta|}{2\pi}
\right)^2.
\end{split}
\end{equation}
In order to exclude collisions, it is enough to show that
$$A_T\left(\exp\left(-i\frac{\beta}{T}t\right)y_\beta\right)<\mathcal A_*.$$
Since the kinetic part of $y_\beta$ is bounded by $B_\beta$,
Proposition~\ref{prop: estimate of action after applying rotation} gives
\begin{equation}
\begin{split}
A_T\left(\exp\left(-i\frac{\beta}{T}t\right)y_\beta\right)
&\leq
A_T(y_\beta)+\delta\left(\beta,-\beta,B_\beta\right)\\
&\leq
B_\beta+\delta\left(\beta,-\beta,B_\beta\right).
\end{split}
\end{equation}
Using the explicit expression for $\delta$ in \eqref{eq: explicit value of delta} and \eqref{eq: explicit B beta}, we have
\begin{equation}
\begin{split}
B_\beta+\delta\left(\beta,-\beta,B_\beta\right)
&=
B_\beta
\left(
1+\frac{2|\beta|}{2\pi-|\beta|}
+\frac{\beta^2}{(2\pi-|\beta|)^2}
\right)\\
&=
B_\beta
\left(
\frac{2\pi}{2\pi-|\beta|}
\right)^2\\
&=
\mathcal A_{\mathrm{test}}
\left(
\frac{2\pi+|\beta|}{2\pi-|\beta|}
\right)^2.
\end{split}
\end{equation}
Thus it suffices that
$$\mathcal A_{\mathrm{test}}\left(\frac{2\pi+|\beta|}{2\pi-|\beta|}\right)^2<\mathcal A_*.$$
Since $|\beta|<2\pi$, both sides are positive, and the preceding inequality is equivalent to
$$\sqrt{\mathcal A_{\mathrm{test}}}\left(2\pi+|\beta|\right)
<
\sqrt{\mathcal A_*}\left(2\pi-|\beta|\right).$$
Rearranging gives
$$|\beta|
<
2\pi\frac{\sqrt{\mathcal A_*}-\sqrt{\mathcal A_{\mathrm{test}}}}
{\sqrt{\mathcal A_*}+\sqrt{\mathcal A_{\mathrm{test}}}}.$$
Therefore one may choose
$$\theta
=
2\pi\frac{\sqrt{\mathcal A_*}-\sqrt{\mathcal A_{\mathrm{test}}}}
{\sqrt{\mathcal A_*}+\sqrt{\mathcal A_{\mathrm{test}}}}.$$
For every $|\beta|<\theta$,
$$
A_T\left(\exp\left(-i\frac{\beta}{T}t\right)y_\beta\right)<\mathcal A_*,
$$
so by the hypothesis of the corollary, $y_\beta$ is collision-free.
\end{proof}

    \begin{proof}[Proof of Theorem~\ref{theorem: weak force, local minimizer version}]
Since $H^1([0,T],V)$ embeds continuously into $C^0([0,T],V)$, $x_0$ is collision-free, and $D$ is $H$-admissible, we may choose $r>0$ so small that the closed $H^1$-ball of radius $r$ about $x_0$ in $\lm^H$ is contained in $D$ and uniformly separated from collisions. Decreasing $r$ if necessary, the strict local minimality of $x_0$ gives
$$A_T(x)>A_T(x_0)$$
for every $x\in\lm^H$ with
$$0<\norm{x-x_0}_{H^1}\leq r.$$
In fact, there exists $\epsilon>0$ such that
$$A_T(x_0)+\epsilon\leq\inf_{\substack{x\in\lm^H\\ \norm{x-x_0}_{H^1}=r}}A_T(x).$$
Otherwise we could find a sequence $(x^{[k]})$ on the $H^1$-sphere of radius $r$ such that
$$A_T(x^{[k]})\longrightarrow A_T(x_0).$$
By Lemma~\ref{lemma:direct method compactness}, after passing to a subsequence, $x^{[k]}$ converges uniformly and weakly in $H^1$ to some $x^*\in D$. Lower semicontinuity and strict local minimality give $x^*=x_0$, and the second statement of Lemma~\ref{lemma:direct method compactness} then gives $x^{[k]}\to x_0$ strongly in $H^1$, a contradiction.

Set
$$B_r:=\left\{x\in\lm^H:\norm{x-x_0}_{H^1}\leq r\right\}.$$
By the choice of $r$, we have $B_r\subset D$, and every curve in $B_r$ is collision-free. Since $B_r$ is bounded in $H^1$, there exist $K_1,K_2>0$ such that, for every $x\in B_r$,
$$\int_0^T K(x(t),\dot x(t))\,dt\leq K_1,\qquad \norm{x}_\infty\leq K_2.$$

Note that the estimate for $|A_\beta(x)-A_T(x)|$ corresponds to the special case $\beta_1=0$ and $\beta_2=\beta$ of \eqref{eq:estimate of action difference after rotation}. This particular estimate remains valid without assuming $V^H=\{0\}$. Applying the Cauchy--Schwarz inequality to the first term on the right-hand side of \eqref{eq:estimate of action difference after rotation}, we obtain
$$|A_\beta(x)-A_T(x)|\leq|\beta|K_2\sqrt{\frac{2m_{\max}K_1}{T}}+\frac{\beta^2m_{\max}K_2^2}{2T}.$$
Set
$$\delta_{B_r}(\beta):=|\beta|K_2\sqrt{\frac{2m_{\max}K_1}{T}}+\frac{\beta^2m_{\max}K_2^2}{2T}.$$
Then
$$|A_\beta(x)-A_T(x)|\leq\delta_{B_r}(\beta)$$
uniformly for $x\in B_r$, and
$$\delta_{B_r}(\beta)\longrightarrow0\qquad\text{as }\beta\longrightarrow0.$$
Therefore, there exists $\theta>0$ such that, for every $|\beta|<\theta$,
$$2\delta_{B_r}(\beta)<\epsilon,$$
and hence
$$\inf_{\substack{x\in\lm^H\\ \norm{x-x_0}_{H^1}=r}}A_\beta(x)>A_\beta(x_0).$$

By the standard compactness argument, $A_\beta$ attains its minimum over $B_r$ at some $x_\beta$. The above inequality implies that $x_\beta$ lies in the interior of $B_r$ for every $|\beta|<\theta$, and hence is a local minimizer of $A_\beta$ over $\lm^H$, and therefore also over $D$. Consequently,
$$y_\beta(t):=\exp\left(i\frac{\beta}{T}t\right)x_\beta(t)$$
is a local minimizer of $A_T$ over $D_\beta$. By the choice of $r$, $x_\beta$ is collision-free, and therefore $y_\beta$ is also collision-free.

We now show that $y_\beta$ converges to $x_0$ in $H^1$. Note that
\begin{equation*}
\begin{split}
A_T(x_0)
&\leq A_T(x_\beta)\\
&\leq A_\beta(x_\beta)+\delta_{B_r}(\beta)\\
&\leq A_\beta(x_0)+\delta_{B_r}(\beta)\\
&\leq A_T(x_0)+2\delta_{B_r}(\beta).
\end{split}
\end{equation*}
Hence
$$A_T(x_\beta)\longrightarrow A_T(x_0)\qquad\text{as }\beta\longrightarrow0.$$
Given any sequence $\beta_k\to0$, by Lemma~\ref{lemma:direct method compactness} and the strict local minimality of $x_0$, after passing to a subsequence, still denoted by $\beta_k$, we have
$$x_{\beta_k}\longrightarrow x_0\qquad\text{strongly in }H^1.$$
Since $\beta_k\to0$, it follows that
$$y_{\beta_k}=\exp\left(i\frac{\beta_k}{T}t\right)x_{\beta_k}(t)\longrightarrow x_0\qquad\text{strongly in }H^1.$$
Thus
$$\norm{y_\beta-x_0}_{H^1}\longrightarrow0\qquad\text{as }\beta\longrightarrow0.$$

Lastly, by Proposition~\ref{prop: collision-free minimizer is solution} for $\beta\neq0$, and by Lemma~\ref{lemma: admissible critical point} together with Palais's principle for $\beta=0$, $y_\beta$ extends, via
$$y_\beta(t+kT)=\exp(ik\beta)y_\beta(t),$$
to a solution of the $n$-body equations on $\mathbb R$.
\end{proof}

\begin{remark}
The setting of the main theorems can also be generalized to
$(\mathbb R^3)^n$ after fixing a nonzero vector
$\omega\in\mathbb R^3$ and considering the one-parameter family of
rotations with angular velocities $\lambda\omega$, $\lambda\in\mathbb R$.
In this case, the symmetry representations take the form
$$\rho:H\to O(3),\qquad \tau:H\to O(2),\qquad \sigma:H\to S_n.$$
A natural analogue of the planar type~R condition is
$$\det(\tau(h))\det(\rho(h))\,\rho(h)\omega=\omega
\qquad\text{for every }h\in H.$$
The planar setting is recovered by taking
$$\omega=(0,0,1)$$
and identifying $O(2)$ with the subgroup of $O(3)$ acting on the
$xy$-plane and fixing the $z$-axis. In this case,
$$\rho(h)\omega=\omega,$$
and hence the above condition reduces to
$$\det(\rho(h))\det(\tau(h))=1,$$
which is precisely the planar type~R condition.

With this modified symmetry action and compatibility condition, the
arguments of Theorems~\ref{theorem: weak force, glocal minimizer version}
and~\ref{theorem: weak force, local minimizer version} extend to the
spatial setting. See \cite{chenciner2005rotating} for a related
treatment of rotating solutions in the spatial three-body problem,
based on the implicit-function theorem rather than variational methods.

Although both weak-force results extend formally to $\mathbb R^3$, it is
worth noting that the collision-free ordered configuration space in
dimension three is simply connected. Thus the nontrivial fundamental
group available in the planar problem no longer provides a topological
mechanism for constraining global minimizers. Consequently, the hypothesis in
Theorem~\ref{theorem: weak force, glocal minimizer version} requiring
all global minimizers to be collision-free may be substantially harder
to satisfy in the spatial setting. This issue does not enter
Theorem~\ref{theorem: weak force, local minimizer version} in the same
way, since local minimality is a local property.
\end{remark}

%% file: part5.tex
\subsection{The figure-eight orbit}

Consider the Newtonian three-body problem in the weak-force regime, namely
$$
\alpha=1,\qquad n=3,\qquad m_1=m_2=m_3=1,
$$
and let $x_8$ denote the figure-eight solution with period $T$.

We introduce three symmetry operators on $\lm$ and define
$$
H:=\langle S_C,-S_T,S_M\rangle,
$$
where, in terms of the representations $\rho,\tau,\sigma$ introduced
above, $S_C,S_T,S_M$ are given by
$$
\begin{array}{c|c|c|c}
& \rho & \tau & \sigma\\
\hline
S_C
& I
& t\mapsto t-T/3
& (123)\\
S_T
& I
& t\mapsto -t
& (23)\\
S_M
& \mu
& t\mapsto t+T/2
& \mathrm{id},
\end{array}
$$
with
$$
\mu=
\begin{pmatrix}
-1&0\\
0&1
\end{pmatrix}.
$$
The figure-eight satisfies
$$
S_Cx_8=x_8,\qquad
S_Tx_8=-x_8,\qquad
S_Mx_8=x_8,
$$
and hence
$$
x_8\in\lm^H.
$$
The Chenciner--Montgomery variational construction shows that $x_8$
is a global minimizer of the action under these symmetry constraints
\cite{chenciner2000remarkable}.

\begin{figure}[tbp]
\centering
\includegraphics[width=\textwidth]{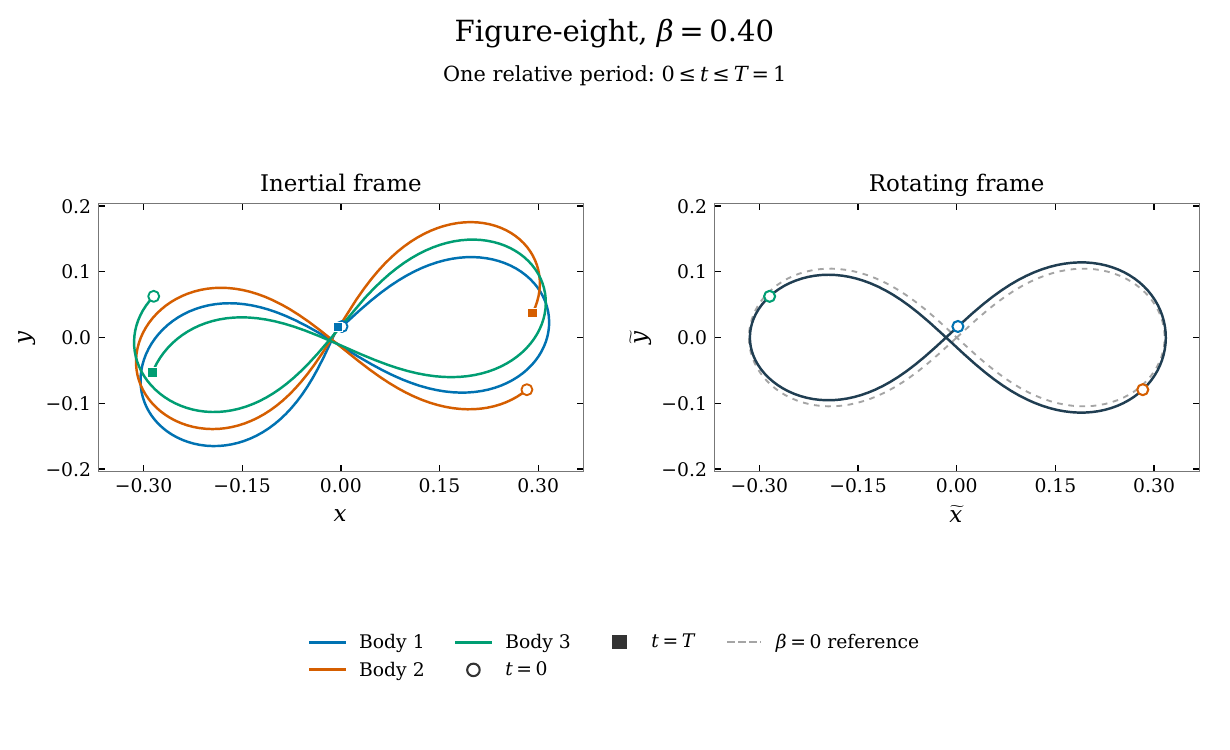}
\caption{Numerical continuation of the figure-eight in the $H'$-symmetry
class. Left: inertial trajectories
$y_i(t)$ over one relative period. Right: the periodic profile
$x_i(t)=e^{-i\beta t/T}y_i(t)$, with the shared path drawn once.
Hollow circles mark $t=0$, filled squares mark inertial endpoints,
and the dashed gray curve is the $\beta=0$ reference.}
\label{fig:rotating-figure-eight}
\end{figure}
The full group $H$ is not of type~R. Indeed,
$$
\det\rho(S_C)\det\tau(S_C)=1,
$$
while
$$
\det\rho(-S_T)\det\tau(-S_T)
=
\det\rho(S_M)\det\tau(S_M)
=
-1.
$$
Thus, in order to apply our theorem, we pass to the type~R subgroup
$$
H'
=
\langle S_C,-S_TS_M\rangle.
$$
In this symmetry class, $x_8$ is no longer a global minimizer (see
\cite{Barutello2008Symmetry}). We therefore investigate instead whether
$x_8$ is a strict local minimizer.

To this end, we use the numerical Hessian computation in a preprint by
Fujiwara, Fukuda, and Ozaki
\cite{fujiwara2018decompositionhessianmatrixaction}. Let $W$ denote the zero-center-of-mass variation space. For
$i\in\{C,T,M\}$, set
$$
W_i:=\{\Psi\in W:S_i\Psi=\Psi\}.
$$
The corresponding orthogonal projections commute, and hence decompose
$W$ into $2^3$ mutually orthogonal symmetry sectors. Moreover, for
$i\in\{T,M\}$,
$$
W_i^\perp=\{\Psi\in W:S_i\Psi=-\Psi\}.
$$
An $H'$-equivariant variation $\Psi$ satisfies
$$
S_C\Psi=\Psi,
\qquad
S_TS_M\Psi=-\Psi.
$$
Hence the $H'$-equivariant variation space is
$$
\left(W_C\cap W_T\cap W_M^\perp\right)
\oplus
\left(W_C\cap W_T^\perp\cap W_M\right),
$$
which corresponds precisely to the sectors denoted by $cc-$ and $cs+$,
respectively, in
\cite{fujiwara2018decompositionhessianmatrixaction}. Fujiwara,
Fukuda, and Ozaki numerically obtain the lowest normalized Hessian
eigenvalues in these two sectors:
$$
\widetilde\lambda_{cc-}=1.93391,
\qquad
\widetilde\lambda_{cs+}=2.32495.
$$
Thus their computation provides numerical evidence that the Hessian
of the action at $x_8$ is positive on the $H'$-equivariant variation
space, and hence numerical evidence that $x_8$ is a strict local
minimizer in $\lm^{H'}$.

Consequently, under this numerically supported strict-local-minimality assumption, Theorem~\ref{theorem: weak force, local minimizer version} applies. Hence, for all sufficiently small $|\beta|$, there exists a collision-free local minimizer
$$
y_\beta\in(\lm^{H'})_\beta
$$
which gives a solution of the three-body problem and satisfies
$$
\norm{y_\beta-x_8}_{H^1}\longrightarrow0
\qquad\text{as }\beta\longrightarrow0.
$$

As illustrated in Figure~\ref{fig:rotating-figure-eight}, in the rotating frame the right lobe of the figure-eight orbit is slightly larger than the left one. This asymmetry reflects the loss of the reflection symmetry across the $y$-axis when the symmetry group is reduced from $H$ to $H'$.

\subsection{Double choreographic loops with q-fold rotation symmetry}

Consider the planar Newtonian four-body problem with equal masses,
$$
m_1=m_2=m_3=m_4=1,
\qquad
\alpha=1.
$$
For $q=3,5$, let
$$
H_q:=\langle h_1,h_2,h_3\rangle
$$
act on $\lm$ by
$$
\begin{aligned}
(h_1\cdot x)(t)
&=
\bigl(
\overline{x}_2,\overline{x}_1,
\overline{x}_4,\overline{x}_3
\bigr)(-t),\\
(h_2\cdot x)(t)
&=
e^{\pi i/q}
\bigl(
x_3,x_4,x_1,x_2
\bigr)
\left(t+\frac{T}{2q}\right),\\
(h_3\cdot x)(t)
&=
-\bigl(
x_2,x_1,x_4,x_3
\bigr)(t).
\end{aligned}
$$
The corresponding fixed-point space is
$$
\lm^{H_q}
=
\left\{
x\in\lm:
h_1\cdot x=x,\;
h_2\cdot x=x,\;
h_3\cdot x=x
\right\}.
$$
It is clear that $V^{H_q}=\{0\}$ and that $H_q$ is of type~R.
For $q=3,5,$ Chen proved that every minimizer of $A_T$ on $\lm^{H_q}$ is
collision-free \cite{chen2003binary}. Hence
Theorem~\ref{theorem: weak force, glocal minimizer version} applies,
yielding collision-free $\beta$-periodic solutions for all
$|\beta|<\theta$ for some $\theta>0$. 

\begin{figure}[!t]
    \centering

    \begin{subfigure}{0.8\textwidth}
        \centering
        \includegraphics[width=\linewidth]
            {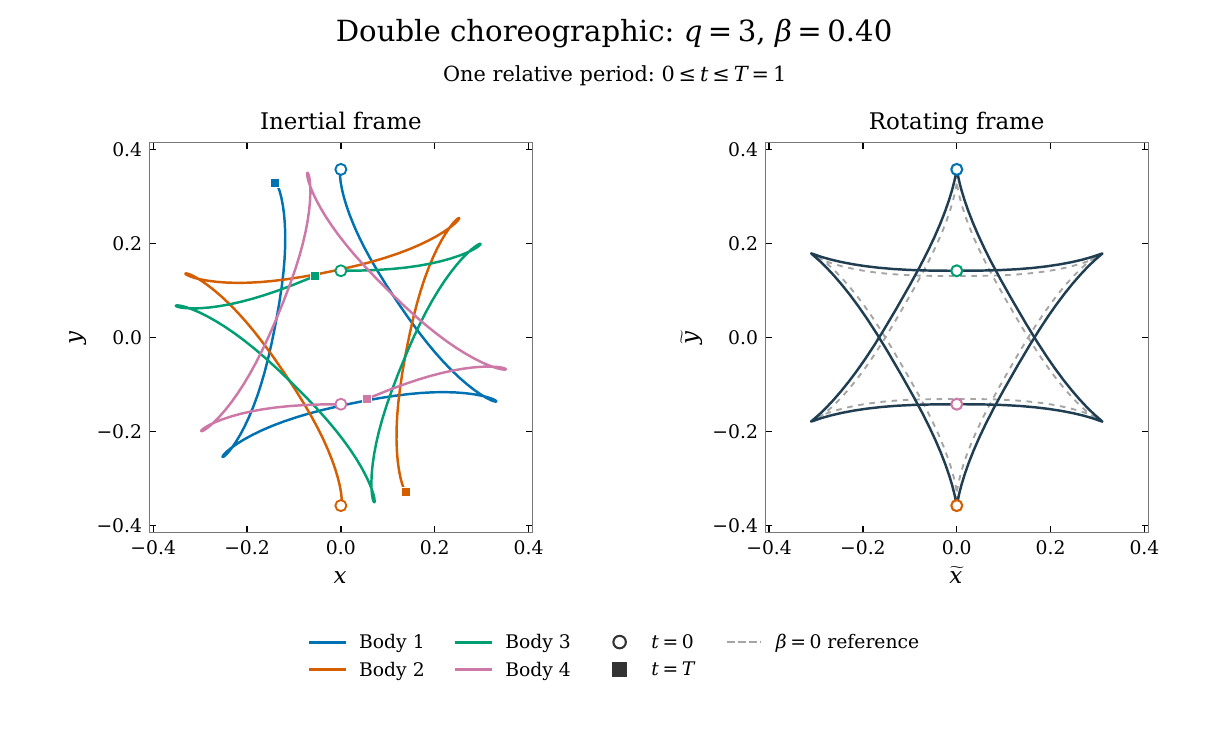}
        \label{fig:double-q3}
    \end{subfigure}

    \caption{Numerical continuation of double choreographic orbits at $\beta=0.40$, outside the explicit sufficient ranges obtained below. The trajectories are displayed using the same conventions as in
Figure~\ref{fig:rotating-figure-eight}.}
    \label{fig:double-choreographic}
\end{figure}

Moreover, with $T=1$, Chen obtained explicit lower bounds for the action
of paths having a collision at one of the boundary points
$$t=0\qquad\text{or}\qquad t=\frac{1}{4q},$$
and constructed test paths $x_{\mathrm{test}}\in\lm^{H_q}$ whose
actions are strictly below the corresponding bounds. These estimates
are given in the proof of Theorem~5 of his paper for $q=3$ and in the
subsequent discussion for $q=5$.

Therefore, Corollary~\ref{corollary: explicit value of theta in glocal theorem}
gives an explicit admissible value of $\theta$ in the case of $T=1$, provided that we show
that any colliding minimizer $y_\beta$ must also have a collision at
$t=0$ or $t=\frac{1}{4q}$.

Let $y_\beta$ be a minimizer of $A_T$ over $(\lm^{H_q})_\beta$ for
$|\beta|<2\pi$, and set
$$x_\beta(t):=e^{-i\beta t/T}y_\beta(t).$$
Then $x_\beta$ is a minimizer of $A_\beta$ over $\lm^{H_q}$. Let
$$T_*:=\frac{T}{4q},\qquad I:=[0,T_*],$$
be a fundamental domain for the action of $H_q$ on the time circle. Here define
$$K:=\ker\tau=\langle h_3\rangle.$$
Since $\rho(h_3)=-I$ is a rotation by $\pi$ and $h_3$ exchanges the
bodies in the pairs $(1,2)$ and $(3,4)$, $K$ has the rotating circle
property introduced by Ferrario--Terracini
\cite{ferrario2004existence}. Hence, by their Theorem~10.7,
$x_\beta$ has no interior collisions on $I$. By $H_q$-equivariance, any
collision has a symmetry image in $I$, and therefore any colliding
$y_\beta$ must also have a collision on the boundary of $I$, namely at
$t=0$ or $t=T_*$. Ferrario--Terracini also verify the rotating circle property
for $K$ in their Example~11.4.

We may now apply Corollary~\ref{corollary: explicit value of theta in glocal theorem}. Here $\mathcal A_{\mathrm{test}}$ denotes the action of Chen's test path, and $\mathcal A_*$ denotes the corresponding boundary-collision action lower bound. For $q=3$,
$$\mathcal A_{\mathrm{test}}\approx 26.0476,\qquad \mathcal A_*\approx26.2386,$$
and hence
$$\theta
=
2\pi\frac{\sqrt{\mathcal A_*}-\sqrt{\mathcal A_{\mathrm{test}}}}
{\sqrt{\mathcal A_*}+\sqrt{\mathcal A_{\mathrm{test}}}}
\approx 0.01148.$$
Similarly, for $q=5$,
$$\mathcal A_{\mathrm{test}}\approx 33.8819,\qquad \mathcal A_*\approx 36.2246,$$
which gives
$$\theta
=
2\pi\frac{\sqrt{\mathcal A_*}-\sqrt{\mathcal A_{\mathrm{test}}}}
{\sqrt{\mathcal A_*}+\sqrt{\mathcal A_{\mathrm{test}}}}
\approx 0.1050.$$

It is worth noting that this application overlaps with the earlier
construction of Chen \cite{CHEN_2003}. Indeed, it is straightforward to
verify that, with
$$
S=\frac{T}{q},
\qquad
d=\frac{q}{1-\beta/(2\pi)},
$$
and with $G$ the four-body symmetry group introduced in Section~5 of
\cite{CHEN_2003}, Chen's space $H_{d,S}^G$ coincides with
$(\lm^{H_q})_\beta$. Chen proves that, for $d\geq d_0$ with
$d_0\leq3.55$, every global minimizer in $H_{d,S}^G$ is collision-free.
Consequently, his result already covers the rotating family obtained
here from the case $q=5$ for all sufficiently small $|\beta|$.

For $q=3$, Chen also verifies the case $d=3$ by a numerical
test-path comparison and notes that the bound on $d_0$ can be improved
using numerical methods. This suggests that part of the rotating family
near $q=3$ may also be covered by his approach, although such a
neighborhood of $d=3$ is not established there.

\subsection{Counter-rotating double choreographies in planar $(N+3)$-body problems}

Consider the planar Newtonian $(N+3)$-body problem with equal masses,
$$m_1=\cdots=m_{N+3}=1,\qquad \alpha=1.$$
Given a parameter choice
$$\mathbf p:=(N,r,k_1,k_2,d),$$
let
$$H_{\mathbf p}:=\langle h_1,h_2,h_3\rangle$$
act on $\lm$ by
\begin{align*}
(h_1\cdot x)(t)
&=
\exp\left(-\frac{2\pi i d}{r}\right)
\bigl(x_1,\ldots,x_{N+3}\bigr)
\left(t+\frac{T}{r}\right),\\
(h_2\cdot x)(t)
&=
\bigl(
x_1,\ldots,x_N,
x_{N+3},x_{N+1},x_{N+2}
\bigr)
\left(t+\frac{T}{3}\right),\\
(h_3\cdot x)(t)
&=
\bigl(
x_N,x_1,\ldots,x_{N-1},
x_{N+1},x_{N+2},x_{N+3}
\bigr)
\left(t+\frac{T}{N}\right).
\end{align*}
Define
$$
D_{\mathbf p}
:=
\overline{
\left\{
x\in\lm^{H_{\mathbf p}}:
\begin{array}{l}
x_i(t)\neq x_j(t)\text{ for all }i\neq j\text{ and }t\in[0,T],\\
\deg(x_i-x_j)=k_1,\qquad 1\leq i<j\leq N,\\
\deg(x_i-x_j)=k_2,\qquad N+1\leq i<j\leq N+3
\end{array}
\right\}
},
$$
where $\deg(x_i-x_j)$ denotes the winding number of the relative loop $x_i-x_j$ about the origin. This is precisely the closure $\overline{\Lambda}_2$ considered by Yuan and Zhang \cite{yuan2018new}. They prove that for
$$
\mathbf p=(4,7,3,-4,3),\qquad
(5,8,3,-5,3),\qquad
(7,10,3,-7,3),
$$
every global minimizer of $A_T$ over $D_{\mathbf p}$ is collision-free and hence gives a periodic solution of the corresponding $(N+3)$-body problem.
\begin{figure}[!t]
    \centering
    \begin{subfigure}{0.8\textwidth}
        \centering
        \includegraphics[width=\linewidth]
            {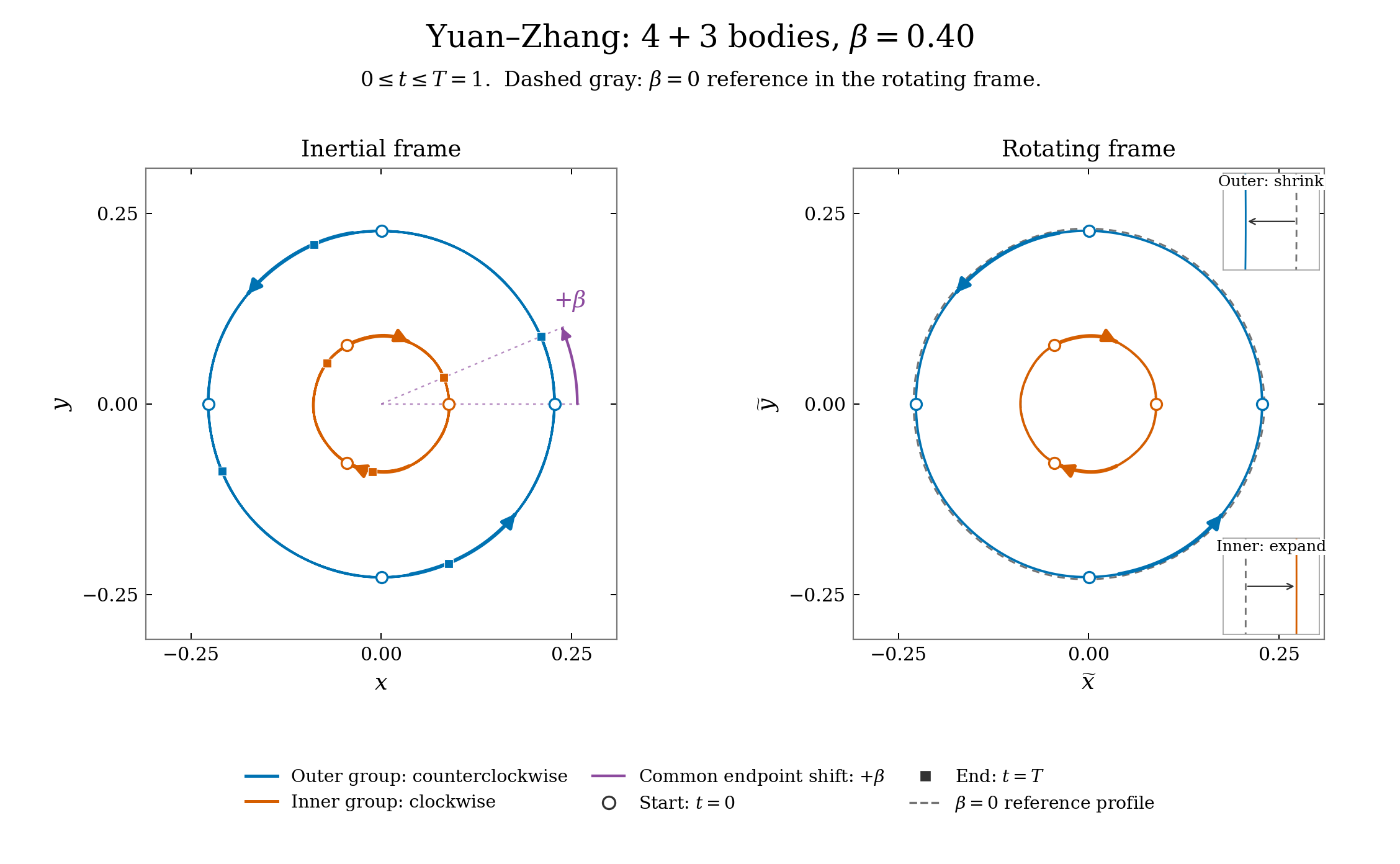}
    \end{subfigure}

    \caption{Numerical continuation of the counter-rotating double
choreography in the seven-body case ($N=4$) at $\beta=0.40$. The trajectories are displayed using the same conventions as in
Figure~\ref{fig:rotating-figure-eight}.}
    \label{fig:double-n7}
\end{figure}
For the above three choices of $\mathbf p$, $V^{H_{\mathbf p}}=\{0\}$ and $H_{\mathbf p}$ is of type~R. They also observe that $D_{\mathbf p}$ is weakly closed in $\lm^{H_{\mathbf p}}$. Moreover, since $H^1([0,T],V)$ embeds continuously into $C^0([0,T],V)$ and winding numbers are locally constant on collision-free loops, every collision-free element of $D_{\mathbf p}$ is an interior point of $D_{\mathbf p}$ relative to $\lm^{H_{\mathbf p}}$. Hence $D_{\mathbf p}$ is $H_{\mathbf p}$-admissible.

Therefore, Theorem~\ref{theorem: weak force, glocal minimizer version} applies, and for all sufficiently small $|\beta|$ every global minimizer
$$
y_\beta\in(D_{\mathbf p})_\beta
$$
is collision-free and extends to a solution of the $(N+3)$-body equations on $\mathbb R$.

Figure~\ref{fig:double-n7} illustrates how the orbit changes in the rotating frame, as highlighted in the two zoomed-in panels. The numerical computations suggest that, as $\beta$ increases, the inner circular component slightly expands while the outer component contracts.